%
%
%
%
\documentclass[10pt]{article}
\usepackage{amsthm,amsfonts,amsmath,amscd,amssymb,epsfig,psfrag}
\usepackage{enumerate}

\usepackage{hyperref}
\usepackage[all]{xy}
\parindent=0pt
\parskip=4pt
%
%
%
\theoremstyle{plain}
\newtheorem{thm}{Theorem}

\newtheorem{lem}[thm]{Lemma}

\newtheorem{ex}[thm]{Example}
\newtheorem{defn}[thm]{Definition}

\theoremstyle{remark}

\newtheorem*{rem}{Remark}
\newtheorem*{rems}{Remarks}

\theoremstyle{definition}

%
%

%
\newcommand{\id}{{{\mathchoice {\rm 1\mskip-4mu l} {\rm 1\mskip-4mu l}
{\rm 1\mskip-4.5mu l} {\rm 1\mskip-5mu l}}}}

\newcommand{\wh}{\widehat}
\newcommand{\wt}{\widetilde}
\newcommand{\ol}{\overline}
\newcommand{\ul}{\underline}

\newcommand{\om}{\omega}

\newcommand{\eps}{\varepsilon}

\newcommand{\lra}{\longrightarrow}
\newcommand{\N}{{\mathbb{N}}}
\newcommand{\Z}{{\mathbb{Z}}}
\newcommand{\R}{{\mathbb{R}}}

\newcommand{\Q}{{\mathbb{Q}}}

\renewcommand{\a}{{\bf a}}

%

\newcommand{\im}{{\rm im\, }}        

\newcommand{\LL}{\mathcal{L}}

\renewcommand{\AA}{\mathcal{A}}

\newcommand{\PP}{\mathcal{P}}

%

%

\newcommand{\ab}{{\mathbf a}}
\newcommand{\ib}{{\mathbf i}}

\newcommand{\hb}{{\mathbf h}}
\newcommand{\fb}{{\mathbf f}}
\newcommand{\gb}{{\mathbf g}}

\newcommand{\vect}{\overrightarrow}
\newcommand{\tcev}{\overleftarrow}

\newcommand{\del}{\partial}

\newcommand{\lin}{\mathrm{lin}}

\newcommand{\comp}{\mathrm{comp}}
\newcommand{\comment}[1]{}
%
%
%

\begin{document}

\title{Remarks on the rational SFT formalism}
\author{Janko Latschev\footnote{Fachbereich Mathematik, Universit\"at Hamburg, Bundesstra{\ss}e 55, 20146 Hamburg, Germany}
}
\maketitle
\begin{abstract}
Recently, rational symplectic field theory (SFT) has played a central role in the work of Siegel~\cite{Sie19} and of Moreno and Zhou~\cite{MZ20}, who defined (different) new invariants of contact manifolds using the theory. The goal of this note is to show how these are related to the original formulation of the theory by Eliashberg, Givental and Hofer~\cite{EGH00}.
\end{abstract}

\section{Introduction}

Symplectic field theory (SFT) as described by Eliashberg, Givental and Hofer~\cite{EGH00} gives a general framework for discussing holomorphic curve invariants of contact and symplectic manifolds. The theory comes in several levels of generality, ranging from full SFT, which takes into account curves with underlying domains of arbitrary genus, to (linearized) contact homology, which deals with a specific subcollection of curves defined on punctured spheres. The analytic foundations of the theory have still not been published completely, which has rather slowed down the development of the subject.

Recently, however, there have been some works that use the intermediate world of {\em rational} SFT, i.e.~the part of the theory that deals with all curves of genus zero. In \cite{Sie19}, Siegel used rational SFT to define a large number of new symplectic capacities, and showed that they give interesting new embedding obstructions. In \cite{MZ20}, Moreno and Zhou describe and use new numerical invariants of contact manifolds which obstruct the existence of fillings or cobordisms, and which are also extracted from rational SFT. Both of these papers use a rather ad hoc description of the theory developed by Siegel, who implemented suggestions of Hutchings~\cite{Hu13}.

The goal of this note is to show how the relevant parts of the theory can be formulated in the algebraic language of the original description of rational SFT by Eliashberg, Givental and Hofer, and in this way establish a clearer link between the two descriptions of the theory. At the same time, I believe that the algebraic formalism used here makes the symmetries and combinatorial factors in the various constructions a bit more transparent than the graphical approach chosen in \cite{Sie19}. 

In the following section, which forms the core of this work, I describe the relevant aspects of the algebraic formalism. After a brief review of rational SFT in \S~\ref{sec:rsft}, I then explain how the algebraic aspects of the work of Siegel (in \S~\ref{sec:siegel}) and of Moreno-Zhou (in \S~\ref{sec:moreno_zhou}) are dealt with in this formalism. I refrain from discussing any of the geometric arguments given or consequences derived in these works, as at the moment I have nothing to add in this regard.

{\bf Acknowledgments:} It will be obvious from what follows that the immediate inspiration for this note are the two papers by Siegel~\cite{Sie19} and Moreno-Zhou~\cite{MZ20}. My view of the subject was essentially formed in many discussions with Kai Cieliebak over the years, and this note would not exist without our collaboration. I also thank Kyler Siegel for helpful discussions on his work.

\section{Algebra}
\label{sec:algebra}

Throughout this section, we fix a unital ring $\Lambda$ containing $\Q$, and an integer $N\in \Z$. For a $\Z$-graded $\Lambda$-module $V=\oplus_{d\in\Z} V_d$, our convention is that the degree shifted module $V[n]$ has graded components $V[n]_d=V_{d+n}$, so the degree of individual elements is lower by $n$ after the shift.

\subsection{Structures}
\label{ssec:structures}

Let $C$ be a free graded module over the ring $\Lambda$, generated by symbols
$q_\gamma$ with $\gamma$ in some indexing set $\Gamma$. Set $\AA:=SC$,
which we will view as either an algebra or a coalgebra depending on the
context. A linear basis consists of all monomials in the $q_\gamma$.
Moreover, let $\PP:= SC \otimes S(C[-2N])^*$ be (a degree shifted version of)
the linear self-maps of $\AA=SC$.
We identify these with power series in variables $p_\gamma$ with polynomial coefficients in the $q_\gamma$. The degree shift means that
$$
|p_\gamma|=2N-|q_\gamma|.
$$
Thinking of $\PP$ as an algebra over $\Lambda$, we can endow it with a Poisson bracket of degree $-2N$, characterized by the properties that all $p_\gamma$ (respectively all $q_\gamma$) pairwise commute, and
$$
\{p_{\gamma_1},q_{\gamma_2}\} = \kappa_{\gamma_1} \delta_{\gamma_1\gamma_2},
$$
where $\kappa:\Gamma \to \Z$ is a choice of weights.
\begin{rem}
I have included the weights $\kappa_\gamma$ for easier comparison with the formalism of SFT as described in \cite{EGH00}. One could avoid them by absorbing them into either the $p$- or the $q$-variables, for example redefining $p_\gamma$ as $\frac 1{\kappa_\gamma} p_\gamma$.
\end{rem}

Any element $h \in \PP$ 
can be expanded as
$$
h= \sum_{r,s \ge 0} h^r_s,
$$
where $h^r_s$ is of degree $r$ in $p$ and of degree $s$ in $q$.
%
Moreover, we have the ideals
$$
\ol{\PP}:= \{ h \in \PP\,:\, h^0_s=0 \text{ for all } s\ge 0\}
$$
and
$$
\ul{\PP}:= \{ h \in \PP\,:\, h^r_0=0 \text{ for all } r\ge 0\},
$$
Thinking in terms of power series,
$$
h\in \ol{\PP} \quad \iff \quad h|_{p=0}=0 \qquad \text{and} \qquad
h\in \ul{\PP} \quad \iff \quad h|_{q=0}=0.
$$ 
Elements of the intersection
$$
\wh{\PP}:= \ol{\PP} \cap \ul{\PP}
$$
correspond to multiplinear maps from $C$ to $C$ which are symmetric in inputs and outputs.

We now consider an element $h \in \ol{\PP}$ which is homogeneous of degree $d$. For any $r\ge 1$, we  use $h^r=\sum_{s\ge 0} h^r_s$ to define a map
$$
\vect{h^r}: S^r\AA \to \AA
$$
of degree $d-2Nr$ by using iterated brackets, i.e.
$$
\vect{h^r}(w_1 \odot \dots \odot w_r) = \{\dots\{ h^r,w_1\}, \dots\},w_r\},
$$
where $w_i\in \AA$. The fact that this operation is indeed symmetric in the $w_i$ is a direct consequence of the Jacobi identity for the Poisson bracket and the fact that the $w_i$ pairwise Poisson commute. It is convenient to shift degrees and consider the map of degree $d-2N$
$$
\vect{h}=\sum_{r\ge1} \vect{h^r}:\ol{S}(\AA[2N]) \to \AA[2N],
$$
where we use the notation $\ol{S}V = \oplus_{r\ge 1}S^rV$. Viewing $\ol S(\AA[2N])$ as a coalgebra, this map admits a unique extension to a coderivation
$$
D_h=\sum_{r=1}^\infty D^r_h:\ol{S}(\AA[2N]) \to \ol{S}(\AA[2N])
$$
of the same degree, where for $n\ge r$ the map $D^r_h:S^n(\AA[2N]) \to S^{n-r+1}(\AA[2N])$ is given by
\begin{equation}\label{eq:coder}
D^r_h(w_1 \odot \dots \odot w_n) = \hspace{-5mm}\sum_{{i_1<i_2<\dots<i_r}\atop {i_{r+1}<i_{r+2}<\dots<i_N}}  \hspace{-5mm}(-1)^{\boxtimes}\,\,\vect{h^r}(w_{i_1} \odot \dots \odot w_{i_r}) \odot w_{i_{r+1}} \odot \dots \odot w_{i_n}.
\end{equation}
Here the sum is over all $(r,n-r)$-unshuffles, and $(-1)^\boxtimes$ is the sign acquired from permuting the words $w_i$ into the given order (the even shift in degrees is harmless here). By a routine computation, we get
\begin{lem}\label{lem:codiff}
For homogeneous elements  $h$ and $g$ of $\ol{\PP}$ we have
$$
D_h \circ D_g- (-1)^{|h|\cdot |g|}D_g \circ D_h=D_{\{h,g\}}.
$$
In particular, if $h\in \ol\PP$ has odd degree and $\{h,h\}=0$, then $D_h$ is a codifferential. \phantom{blah} \hfill$\qed$
\end{lem}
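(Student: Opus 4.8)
The plan is to use that a coderivation of the reduced symmetric coalgebra $\ol{S}(\AA[2N])$ is completely determined by its composition with the projection $\mathrm{pr}\colon\ol{S}(\AA[2N])\to\AA[2N]$ onto the cogenerators. The graded commutator $D_h\circ D_g-(-1)^{|h|\cdot|g|}D_g\circ D_h$ of the two coderivations is again a coderivation; moreover the Poisson bracket lowers the $p$-degree by one, so it maps $\ol{\PP}\times\ol{\PP}$ into $\ol{\PP}$, whence $D_{\{h,g\}}$ is defined, and a one-line degree count shows it has the same degree as the commutator. It therefore suffices to check that the two sides agree after composing with $\mathrm{pr}$. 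By the defining property of $D_h$ as the coderivation extending $\vect{h}$ we have $\mathrm{pr}\circ D_{\{h,g\}}=\vect{\{h,g\}}$, which on $S^n(\AA[2N])$ restricts to $\vect{\{h,g\}^n}$.

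The next step is to compute $\mathrm{pr}\circ\bigl(D_h\circ D_g-(-1)^{|h|\cdot|g|}D_g\circ D_h\bigr)$ on $S^n(\AA[2N])$. In $\mathrm{pr}\circ D_h\circ D_g$ only the pieces $\mathrm{pr}\circ D^b_h\circ D^a_g$ with $a+b=n+1$ survive, and since $D^a_g$ produces $n-a+1$ factors of which $D^b_h$ must then consume all, each term has the shape $(-1)^\boxtimes\,\vect{h^{\,n-a+1}}\bigl(\vect{g^a}(w_{i_1}\odot\dots\odot w_{i_a})\odot w_{i_{a+1}}\odot\dots\odot w_{i_n}\bigr)$, summed over $1\le a\le n$ and over all splittings of $\{1,\dots,n\}$ into an $a$-element subset $\{i_1<\dots<i_a\}$ and its complement (different orderings of the inputs yield equal terms, since $\vect{g^a}$ and $\vect{h^{\,n-a+1}}$ are symmetric). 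Swapping $h$ and $g$ gives the matching description of $\mathrm{pr}\circ D_g\circ D_h$.

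To identify the right-hand side I would write $\{h,g\}^n=\sum_{r+r'=n+1}\{h^r,g^{r'}\}$ and iterate the Jacobi identity to move the $w_i$ past the outer bracket, obtaining $\vect{\{h^r,g^{r'}\}}(w_1\odot\dots\odot w_n)=\sum_{I\sqcup J=\{1,\dots,n\}}\pm\,\{\vect{h^r}(w_I),\vect{g^{r'}}(w_J)\}$. A count of $p$-degrees shows a summand vanishes unless $(|I|,|J|)$ equals $(r,r'-1)$ or $(r-1,r')$. In the first case $\vect{h^r}(w_I)$ is polynomial in the $q_\gamma$ alone, so by graded antisymmetry of the bracket and symmetry of $\vect{g^{r'}}$ in its inputs the summand becomes $\pm\,\vect{g^{r'}}\bigl(w_J\odot\vect{h^r}(w_I)\bigr)$, which with $a:=r$ is, up to sign, exactly the generic term of $\mathrm{pr}\circ D_g\circ D_h$ above; the second case matches $\mathrm{pr}\circ D_h\circ D_g$ in the same way.

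What then remains is sign bookkeeping: one must check that the Koszul signs $(-1)^\boxtimes$ from the unshuffles in \eqref{eq:coder}, the signs in the graded Jacobi identity and in the graded antisymmetry of the degree-$(-2N)$ bracket, and the degree shifts by $2N$ combine so that the two families of terms enter $\vect{\{h,g\}^n}$ with relative sign precisely $-(-1)^{|h|\cdot|g|}$. Since $2N$ is even it plays no role in the signs, which keeps this in hand; the cleanest organization is to fix one splitting $I\sqcup J=\{1,\dots,n\}$ and compare the sign produced for it on each side. I expect this bookkeeping, and not any conceptual issue, to be the only real obstacle. The ``in particular'' clause is then immediate: taking $g=h$ with $|h|$ odd and $\{h,h\}=0$ gives $2\,D_h\circ D_h=D_{\{h,h\}}=0$, and $D_h\circ D_h=0$ since $2$ is invertible in $\Lambda\supseteq\Q$.
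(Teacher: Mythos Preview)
Your proposal is correct and follows the standard route for such statements: reduce to cogenerators, expand both sides in terms of iterated brackets, and match via the Jacobi identity, with the sign bookkeeping honestly flagged as the remaining routine check. The paper itself gives no proof beyond calling it ``a routine computation'', so there is nothing substantive to compare; your sketch is precisely the kind of computation the author has in mind.
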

\begin{rem}
By our assumption $h\in \ol\PP$, the codifferential associated to an odd Poisson-self-commuting element $h$ has the additional property $D_h(\id)=0$, where $\id\in \Lambda[2N] \subseteq \AA[2N]\cong S^1(\AA[2N])$ is the unit of the algebra $\AA$.
\end{rem}
Suppose $\hb\in \ol{\PP}$ is of degree $2N-1$ and satisfies $\{\hb,\hb\}=0$, so that $D=D_\hb$ has degree $-1$. It is well-known (cf. \cite{MSS02}) that this means that the operations $\{\vect{\hb^r}\}_{r\ge 1}$ can be interpreted as an $L_\infty$-structure on $\AA[2N+1]$. This is the main structure that we will study and refine in the following subsections. Note that, by construction, the individual operations $\vect{\hb^r}$ are derivations with respect to the algebra structure on $\AA$ in each entry.

\begin{rem}
It is worth pointing out that, at this point, we have the following invariants associated to an element $\hb\in \ol\PP$ of degree $2N-1$ satisfying $\{\hb,\hb\}=0$:
\begin{enumerate}[(i)]
\item Taking a single Poisson bracket with $\hb$ defines a derivation $\mathbf D:\PP \to \PP$ of degree $-1$, and so there is an associated homology $H_*(\PP,\mathbf D)$. This is the point of view taken in the description of rational SFT in \cite{EGH00}.
\item We also have the homology $H_*(\ol S(\AA[2N]),D_\hb)$ of the symmetric coalgebra $\ol S(\AA[2N])$ with respect to the coderivation introduced above. Note that, by construction,  $D_\hb$ preserves each of the truncations $S^{\le k}(\AA[2N])$, so we also have the corresponding homologies. 
\item The operator $D^1=\vect{\hb^1}:\AA \to \AA$ induced from the part $\hb^1$ of $\hb$ linear in the $p$-variables is a differential of degree $-1$, and so one can consider its homology $H_*(\AA,D^1)$. In the context of SFT, this is what is known as (full) contact homology.
\end{enumerate}
If desired, one could use homotopy transfer to induce an $L_\infty$-structure on $H_*(\AA[2N+1],D^1)$ from the one on $\AA[2N+1]$. 
\end{rem}

\subsection{Morphisms}
\label{ssec:mor}

Now suppose we have two graded $\Lambda$-modules $C^+$ and $C^-$, together with the associated $\AA^\pm$ and $\PP^\pm$. We introduce the module $\LL:= SC^- \otimes S(C^+[-2N])^*$ of power series in the $p^+$ with polynomial coefficients in the $q^-$. As with $\PP^\pm$ above, we introduce the notation
$$
\ol{\LL}:=\{f\in \LL \,:\, f|_{p^+=0}=0\}, \quad
\ul{\LL}:=\{f\in \LL \,:\, f|_{q^-=0}=0\} \quad\text{and}\quad
\wh{\LL}:= \ol{\LL} \cap \ul{\LL}.
$$

To an element $g=g_s\in \PP^+$ which is of pure degree $s$ in the $q$-variables, we can associate the map
$$
\LL \gets S^s \LL: \tcev{g_s}
$$
given by iterated brackets
$$
(c_1 \odot \dots \odot c_s)\tcev{g_s} = \{c_1, \{\dots,\{c_s,g_s\}\dots\}.
$$
\begin{rem}
Note that $s=0$ is allowed here, and gives the inclusion of a power series $g_0$ in $p^+$ independent of $q^+$ into $S^1\LL \cong \LL$.
\end{rem}
For $s\ge 1$, these maps are derivations in each entry. For all $s\ge 0$, they can be extended to coderivations
$$
S\LL \gets S\LL : \tcev{D_{g_s}}
$$
in the usual way (cf.~\eqref{eq:coder}).

Similarly, given an element $g=g^r \in \PP^-$ which is of pure degree $r$ in $p^-$, we associate to it the map
$$
\vect{g^r}:S^r\LL \to \LL
$$
given by iterated brackets
$$
\vect{g^r}(c_1 \odot \dots \odot c_r) = \{\dots\{g^r,c_1\},\dots\},c_r\}.
$$
These can also be extended to coderivations
$$
\vect{D_{g^r}}: S\LL \to S\LL.
$$
The analogue of Lemma~\ref{lem:codiff} in this context is
\begin{lem}\label{lem:symmetric}
If $g_1$ and $g_2$ are two homogeneous elements of $\PP^+$ with $\{g_1,g_2\}=0$, then
$$
\tcev{D_{g_1}} \circ \tcev{D_{g_2}} = (-1)^{|g_1||g_2|} \tcev{D_{g_2}} \circ \tcev{D_{g_1}}.
$$
The corresponding result also holds for pairs of elements of $\PP^-$. \hfill $\qed$
\end{lem}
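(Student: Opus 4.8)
The plan is to deduce the Lemma from the following refinement, which is the exact analogue of Lemma~\ref{lem:codiff}: writing $\tcev{D_g}:=\sum_{s\ge 0}\tcev{D_{g_s}}$ for a general $g=\sum_s g_s\in\PP^+$ (and similarly $\vect{D_g}$ for $g\in\PP^-$), one has
$$
\vect{D_{g_1}}\circ\vect{D_{g_2}}-(-1)^{|g_1||g_2|}\vect{D_{g_2}}\circ\vect{D_{g_1}}=\vect{D_{\{g_1,g_2\}}}\qquad\text{for homogeneous }g_1,g_2\in\PP^-,
$$
and
$$
\tcev{D_{g_1}}\circ\tcev{D_{g_2}}-(-1)^{|g_1||g_2|}\tcev{D_{g_2}}\circ\tcev{D_{g_1}}=\tcev{D_{\{g_2,g_1\}}}\qquad\text{for homogeneous }g_1,g_2\in\PP^+,
$$
the order of the bracket on the right of the second line being immaterial for our purposes (it depends only on the sign conventions for the degree $-2N$ bracket). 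Granting this, the Lemma follows at once: graded antisymmetry gives $\{g_1,g_2\}=0\Rightarrow\{g_2,g_1\}=0$, and since $g\mapsto\tcev{D_g}$ is linear with $\tcev{D_0}=0$ (and likewise for $\vect{D_{(-)}}$), the right-hand sides vanish.

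To prove the refinement I would first reduce, by bilinearity, to the case that $g_1=g_{1,s_1}$ and $g_2=g_{2,s_2}$ are of pure degree $s_1$ resp.\ $s_2$ in the $q$-variables; this is legitimate because each $g_{i,s}$ has the same degree as $g_i$, and because $\{g_{2,s_2},g_{1,s_1}\}$ is then again of pure $q$-degree $s_1+s_2-1$. Next, recall that a coderivation of the cofree cocommutative coalgebra $S\LL$ is determined by its corestriction $S\LL\to S^1\LL\cong\LL$; as the graded commutator of two coderivations is again a coderivation, it suffices to compare corestrictions. Since $\tcev{D_{g_{i,s_i}}}$ maps $S^n\LL$ to $S^{n-s_i+1}\LL$, the composite $\tcev{D_{g_1}}\circ\tcev{D_{g_2}}$ maps $S^n\LL$ to $S^{n-s_1-s_2+2}\LL$, so the corestriction of either composite is concentrated in word length $n=s_1+s_2-1$, where no ``disjoint'' terms can occur — every input must be used up. There the two composites act by
$$
w_1\odot\dots\odot w_{s_1+s_2-1}\ \longmapsto\ \sum_{|S|=s_2}(-1)^{\boxtimes}\,\tcev{g_1}\bigl(\tcev{g_2}(w_S)\odot w_{S^c}\bigr)
$$
(writing the operators on the left for legibility, with $(-1)^{\boxtimes}$ the unshuffle sign of \eqref{eq:coder}), respectively the same with $(g_1,s_1)$ and $(g_2,s_2)$ interchanged, while $\tcev{D_{\{g_2,g_1\}}}$ acts on the same word by a single iterated bracket against $\{g_2,g_1\}$.

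What remains is thus an identity among iterated Poisson brackets of $w_1,\dots,w_{s_1+s_2-1}$ with $g_1$ and $g_2$. Expanding $\tcev{g_1}\bigl(\tcev{g_2}(w_S)\odot w_{S^c}\bigr)$ and reorganizing via the graded Jacobi identity, the terms in which the bracket built from $g_1$ eventually reaches $g_2$ collect into $\tcev{\{g_2,g_1\}}$ applied to the whole word, while every remaining term cancels, with the sign $(-1)^{|g_1||g_2|}$, against the corresponding term of the opposite composite — here one uses crucially that all elements of $\LL$ Poisson-commute, so that brackets with distinct inputs $w_i$ may be interchanged freely up to the Koszul signs already bundled into $(-1)^{\boxtimes}$. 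The $\PP^-$ statement is proved the same way with $\tcev{(-)}$ replaced by $\vect{(-)}$, and is even closer to the computation behind Lemma~\ref{lem:codiff}. I expect the only genuinely delicate point to be the sign bookkeeping — the interplay of the Koszul signs with the conventions for the degree $-2N$ bracket — but this is precisely the ``routine computation'' of Lemma~\ref{lem:codiff}, and since the outcome $\tcev{D_{\{g_2,g_1\}}}$ is exactly the term we want to see vanish, getting the sign or order of that final bracket slightly wrong would not affect the conclusion.
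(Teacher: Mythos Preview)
Your proposal is correct and follows exactly the approach the paper intends: the paper gives no explicit proof, marking the statement with a bare $\qed$ and calling it ``the analogue of Lemma~\ref{lem:codiff} in this context'', i.e.\ another instance of the same routine computation. Your reduction to corestrictions, use of the Jacobi identity, and the observation that elements of $\LL$ pairwise Poisson-commute (since they contain no $q^+$-variables) are precisely the ingredients behind that computation.
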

As a consequence of this observation, the action of $\PP^+$ on $S\LL$ defined above gives rise to an action of $\ol{S}\AA^+$ on $S\LL$, given by
$$
(c_1 \odot \dots \odot c_n) \tcev{D}_{w_1 \odot \dots \odot w_k} := (c_1 \odot \dots \odot c_n) \tcev{D_{w_1}} \circ \dots \circ \tcev{D_{w_k}}.
$$
\begin{rem}
The total number of brackets taken for $\tcev{D}_{w_1 \odot \dots \odot w_k}$ corresponds to the total word length in the $q^+$-variables of the words $w_i$.
\end{rem}

Given an element $\fb\in \LL$ of degree $2N$, we now consider the expression
$$
e^\fb = 1 + \sum_{k=1}^\infty \frac 1{k!} \underbrace{\fb\odot \dots \odot \fb}_{k \text{ times}},
$$
where $1\in \Lambda \cong S^0\LL$ is the unit of our ground ring.
\begin{rem}
As an infinite sum, this is technically not an element of $S\LL$, but rather
lives in a completion of $S\LL$ with respect to the word length filtration.
The coderivations $\vect{D_{g^r}}$ and $\tcev{D_{g_s}}$ introduced above
still make sense on this completion. As we also use other completions later on, we choose not to overburden the notation and will write things like $e^\fb\in S\LL$ without explicitly mentioning completions.
\end{rem}
One easily verifies that for $g^r\in \PP^-$ of degree $r$ in $p^-$ an $g_s\in \PP^+$ of degree $s$ in $q^+$ we have
$$
\vect{D_{g^r}}(e^\fb) = \vect{g^r}(\frac 1{r!}\fb^{\odot r}) \odot e^\fb \qquad \text{and} \qquad
(e^\fb)\tcev{D_{g_s}} = e^\fb \odot (\frac 1{s!}\fb^{\odot s})\tcev{g_s}.
$$
\begin{rem}
As we assume that $\fb$ has even degree, the position of the $e^\fb$-factor is rather cosmetic.

Note that we abuse notation slightly here, as both $\vect{D_{g^r}}(e^\fb)$ and $(e^\fb)\tcev{D_{g_s}}$ are elements of $\ol S\LL \subset S\LL$, with leading terms $\vect{g^r}(\frac 1{r!}\fb^{\odot r})$ and $(\frac 1{s!}\fb^{\odot s})\tcev{g_s}$ in $S^1\LL =\LL$, respectively.
\end{rem}

\begin{rem}
The terms $\vect{g^r}(\frac 1{r!}\fb^{\odot r})$ and $(\frac 1{s!}\fb^{\odot s})\tcev{g_s}$ have the following alternative description, which helps to establish the relationship to the SFT formalism as formulated in \cite{EGH00}.
Consider a summand $w \cdot w'$ in $g^r$, where $w$ is a word in $q^-$ and $w'=p_{\gamma_1}^-\cdots p_{\gamma_r}^-$ is a word of length $r$ in $p^-$. Then
\begin{align*}
\vect{ww'}(\frac 1{r!}\fb^{\odot r})
&= \frac 1{r!} w \cdot \{\dots\{w',\fb\},\dots\},\fb\} \\
&= \frac 1{r!}w \cdot \sum_{\sigma\in S_r} \kappa_{\gamma_1}\cdots\kappa_{\gamma_r}\frac {\del \fb}{\del p_{\gamma_1}}\cdots \frac{\del \fb}{\del p_{\gamma_r}}\\
&= w \cdot \kappa_{\gamma_1}\cdots\kappa_{\gamma_r}\frac {\del \fb}{\del p_{\gamma_1}}\cdots \frac{\del \fb}{\del p_{\gamma_r}}.
\end{align*}
So
$$
\vect{g^r}(\frac 1{r!}\fb^{\odot r}) = g^r|_{L_\fb},
$$
where $L_\fb \subseteq T^*C^-\oplus T^*(C^+)^*$ is the ``Lagrangian'' graph of $d\fb$, where $\fb$ is viewed as a function of the variables $p^+$ and $q^-$. It is given by the equations
$$
L_\fb :=\left\{ p^-_\gamma=\kappa_\gamma \frac {\del \fb}{\del q^-_\gamma}\,,\, q^+_\gamma=\kappa_\gamma \frac {\del \fb}{\del p^+_\gamma} \right\}.
$$

Similarly, we have
$$
(\frac 1{s!}\fb^{\odot s})\tcev{g_s} = g_s|_{L_\fb}.
$$
\end{rem}

\begin{lem}\label{lem:morphism}
Any $\fb\in \ol{\LL}$ of degree $2N$ gives rise to a morphism of coalgebras $\Phi:\ol{S}(\AA^+[2N]) \to \ol{S}(\AA^-[2N])$ of degree $0$ satisfying $\Phi(\id)=\id$, given on homogeneous generators by the formula
$$
\Phi(w_1 \odot \dots \odot w_r) = \left((e^\fb)\tcev{D}_{w_1 \odot \dots \odot w_r}\right)_{|p^+=0}.
$$
\end{lem}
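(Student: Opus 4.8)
The plan is to unwind the definition of $\Phi$ and reduce the statement to three elementary facts: first, that $e^\fb$ is group-like, $\Delta(e^\fb)=e^\fb\otimes e^\fb$, which is immediate from the binomial expansion of $\Delta(\fb^{\odot k})$; second, that for coderivations $D_1,\dots,D_r$ of $S\LL$ and any group-like $g$ the composite distributes over the coproduct as
\[
\Delta\big((g)D_1\cdots D_r\big)=\sum_{I\sqcup J=\{1,\dots,r\}}(-1)^{\boxtimes}\,\big((g)D_I\big)\otimes\big((g)D_J\big),
\]
where $D_I$ denotes the composite of the $D_i$ with $i\in I$ in increasing order and $(-1)^{\boxtimes}$ is the corresponding Koszul sign — this will follow by a short induction on $r$, peeling off $D_r$ and using the coderivation identity $\Delta D=(D\otimes\id+\id\otimes D)\Delta$; and third, that the substitution $p^+=0$ is $S$ applied to a $\Lambda$-linear map $\LL\to\AA^-$, hence a morphism of coalgebras, and that $e^\fb|_{p^+=0}=\id$, which is exactly where the hypothesis $\fb\in\ol{\LL}$ enters.

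First I would take care of well-definedness and the degree count. For well-definedness on $S^r(\AA^+[2N])$: each $w_i$ is a monomial in the $q^+$-variables, so any two of them Poisson-commute, and Lemma~\ref{lem:symmetric} then gives that the $\tcev{D_{w_i}}$ pairwise commute up to the Koszul sign, which is precisely the statement (already recorded in the text) that the action $\tcev{D}_{w_1\odot\dots\odot w_r}$ depends only on $w_1\odot\dots\odot w_r$; the shift by $2N$ is even and so harmless for signs. One also checks that the composite makes sense on the word-length completion where $e^\fb$ lives, that $(e^\fb)\tcev{D}_{w_1\odot\dots\odot w_r}$ has bounded word length in each fixed total $q^-$-word-length, and that after setting $p^+=0$ one lands in $S(\AA^-[2N])$ because $\LL|_{p^+=0}=\AA^-$. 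For the degree: $\fb$ has degree $2N$, each Poisson bracket lowers degree by $2N$, and the number of brackets in $\tcev{D}_{w_1\odot\dots\odot w_r}$ equals the total $q^+$-word-length of $w_1\odot\dots\odot w_r$, so the degrees add up to $\sum_i(|w_i|-2N)$, which is the degree of $w_1\odot\dots\odot w_r$ in $\ol{S}(\AA^+[2N])$; hence $\Phi$ has degree $0$.

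The main step is then the coalgebra morphism property. Applying the three facts above with $g=e^\fb$ and $D_i=\tcev{D_{w_i}}$ yields
\[
\Delta\big(\Phi(w_1\odot\dots\odot w_r)\big)=\sum_{I\sqcup J=\{1,\dots,r\}}(-1)^{\boxtimes}\,\Phi(w_I)\otimes\Phi(w_J),
\]
with the convention that $\Phi$ applied to the empty word is $\id$. The two extreme terms ($I=\emptyset$ or $J=\emptyset$) are $\id\otimes\Phi(w_1\odot\dots\odot w_r)$ and $\Phi(w_1\odot\dots\odot w_r)\otimes\id$; unwinding the formula shows every term of $\Phi(w_1\odot\dots\odot w_r)$ has between $1$ and $r$ tensor factors, so $\Phi$ lands in $\ol{S}(\AA^-[2N])$, and removing those two terms gives exactly the compatibility of $\Phi$ with the reduced coproduct. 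Finally $\Phi(\id)=\id$ is read off directly, $\id$ being of $q^+$-degree zero. I expect the one genuinely fussy point to be the sign bookkeeping: checking that the Koszul signs $(-1)^{\boxtimes}$ produced by the iterated coderivation identity agree with those in the definition of the reduced coproduct on $S(\AA^-[2N])$ (using that $\Phi$ is degree-preserving), while keeping the degree-$2N$ shifts consistent throughout. None of this is conceptually hard, but it is precisely the kind of combinatorial accounting the present formalism is meant to streamline.
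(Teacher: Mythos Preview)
Your approach is correct and genuinely different from the paper's. The paper does not verify the coalgebra-morphism identity $\Delta\Phi=(\Phi\otimes\Phi)\Delta$ directly; instead it uses the standard characterization that a map into a cofree cocommutative coalgebra is a coalgebra morphism if and only if it has the form $\Phi=e^\phi$ for some $\phi:\ol S\AA^+\to S^1\AA^-$, and then identifies $\phi^r$ explicitly. The key observation there is a tree-counting/Euler-characteristic argument: a summand of $\frac{1}{n!}(\fb^{\odot n})\tcev{D}_{w_1\odot\dots\odot w_r}|_{p^+=0}$ is organized by a graph with $r$ vertices labelled by the $w_i$, $n$ vertices labelled $\fb$, and one edge per bracket; landing in $S^1\AA^-$ forces this graph to be a single tree, which pins down $n=\tau-r+1$ and yields the explicit formula for $\phi^r$.

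Your route---$e^\fb$ is group-like, coderivations interact with group-like elements by the unshuffle formula, and evaluation at $p^+=0$ is $S$ of a linear map hence a coalgebra map---is clean and conceptually transparent for the bare statement of the lemma. What it does \emph{not} give you is the explicit description \eqref{eq:mor_explicit} of the components $\phi^r$ and the accompanying graph interpretation. The paper relies on exactly that picture later: the composition formula in Lemma~\ref{lem:comp_mor} is argued by matching trees, and the degree count is read off from the explicit $\phi^r$. Your degree argument is fine but terse (the cancellation between the $k\cdot 2N$ from $\fb^{\odot k}$ and the $-2N m$ from the shift on $S^m(\AA^-[2N])$, with $m=k-\tau+r$, is what actually makes it independent of $k$). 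Your claim that the image has ``at most $r$'' tensor factors is true---each surviving $\odot$-factor must contain at least one $w_i$ because $\fb|_{p^+=0}=0$---but it is not needed for the lemma as stated; only ``at least $1$'' is, and that follows already from the fact that each $\tcev{D_{w_i}}$ lands in $\ol S\LL$.
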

\begin{proof}[Remarks on the proof]
In this setting, $\Phi$ is a morphism of coalgebras if and only if it can be written as $\Phi=e^\phi$ for a map $\phi:\ol{S}\AA^+ \to S^1\AA^-$. We claim that the component $\phi^r:S^r\AA^+ \to \AA^-$ of $\phi$ of arity $r$ can be explicitly described: On input words $w_1$, \dots, $w_r$ from $\AA^+$ of total word length $\tau$ it is given by
\begin{equation}\label{eq:mor_explicit}
\phi^r(w_1 \odot \dots \odot w_r) = \frac 1{(\tau-r+1)!} \left(\fb^{\odot (\tau-r+1)}\tcev{D}_{w_1 \odot \dots \odot w_r}\right)_{|p^+=0}.
\end{equation}
Indeed, consider a typical summand
$$ 
\frac 1{n!} \left(\fb^{\odot n} \tcev{D}_{w_1\odot \dots \odot w_r}\right)_{|p^+=0}$$
in the definition of $\Phi$. This is itself a sum of various iterated bracket expressions. To each term contributing to such an iterated bracket we can associate a graph with $r$ vertices labelled by the words $w_i$ and $n$ vertices labelled by $\fb$, which describes the pattern in which the iterated brackets connect the pieces. We can think of this graph as being build up gradually, where each new bracket connects two previously unconnected components to each other by a single edge. In particular, all connected components stay trees at all times. As we are taking exactly $\tau$ Poisson brackets in this situation, the final graph has Euler characteristic $r+n-\tau$. In order for the result to land in $\AA^-$, this graph should be a single tree, so we need $n=\tau-r+1$.

From \eqref{eq:mor_explicit}, we can now also conclude that $\phi^r:S^r\AA^+\to S^1\AA^-$ is homogeneous of degree $(1-r)2N$, so it is of degree $0$ after the shift.

Finally, as $\fb\in \ol{\LL}$, we have
$$
\left((e^\fb)\tcev{D}_\id\right)_{|p^+=0} = \left(e^\fb \odot \id \right)_{|p^+=0} = \id.
$$
\end{proof}
\begin{rem}
The relationship between iterated bracket expressions and graphs used in the above discussion is slightly more subtle than it may appear at first. Indeed, suppose $w_1$ and $w_2$ are two words of word length 2 and $w_3$ is of word length 1 in $\AA$. An expression like
$$
\{\fb^1,\{\{\fb^2, \{\{\fb^2, w_1\},w_2\} \}, w_3\}\}
$$
which occurs as a summand in $\phi^3(w_1 \odot w_2 \odot w_3)$ is then made up of two contributions whose corresponding graphs differ. Indeed, in one summand $w_1$ interacts with both $\fb^2$-terms and $w_2$ interacts with the inner one and with $\fb^1$, whereas in the other summand the roles are switched.
\end{rem}
\begin{ex}\label{ex:identity}
For $\AA^+=\AA^-=\AA$, we have an isomorphism of algebras $\LL \cong \PP$. Note however that the actions of elements from $\PP$ via (iterated) Poisson brackets on $\PP$ and $\LL$ will differ. To keep this point clear, we will use variables $p^+$ and $q^-$ when thinking in terms of $\LL$, and variables $p$ and $q$ with the same index set for $\PP$. The identity morphism $\ol S(\AA[2N]) \to \ol S (\AA[2N])$ corresponds to the element
$$
\ib:= \sum_{\gamma \in \Gamma} \frac 1{\kappa_\gamma} q_\gamma^- p_\gamma^+ \in \ol\LL.
$$
\end{ex}

Now suppose we are given two elements $\hb^\pm \in \ol{\PP}^\pm$ or degree $2N-1$ satisfying
$$
\{\hb^\pm,\hb^\pm\} =0.
$$
As explained in the previous subsection, they give rise to codifferentials $D^\pm:\ol{S}\AA^\pm[2N] \to \ol{S}\AA^\pm[2N]$ of degree $-1$.
They also give rise to codifferentials
$$
\vect{D^-}:=\vect{D_{\hb^-}}:S\LL \to S\LL \qquad \text{and} \qquad S\LL \gets S\LL: \tcev{D^+}:=\tcev{D_{\hb^+}},
$$
extending
$$
\vect{\hb^-}:S\LL \to \LL \qquad \text{and} \qquad \LL \gets S\LL : \tcev{\hb^+}.
$$
\begin{lem}\label{lem:comp}
Suppose $w_1,\dots,w_k\in \AA^+$, $\hb^\pm\in \ol{\PP}^\pm$ are as above, and $\fb\in \ol{\LL}$ is of degree $2N$. Then
$$
\left(e^\fb \tcev{D}_{D^+(w_1\odot \dots \odot w_k)}\right)|_{p^+=0} = \left(\left(e^\fb\tcev{D^+}\right)\tcev{D}_{w_1 \odot \dots \odot w_k}\right)|_{p^+=0}
$$
and
$$
D^-\left(e^\fb \tcev{D}_{w_1 \odot \dots \odot w_k}|_{p^+=0}\right) =
\left(\left(\vect{D^-}e^\fb\right) \tcev{D}_{w_1 \odot \dots \odot w_k}\right)|_{p^+=0}. \hfill\qed
$$
\end{lem}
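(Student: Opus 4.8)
Write $W=w_1\odot\dots\odot w_k$ throughout. The two identities I would attack by different routes. For the first, the plan is to move the right action of $\tcev{D^+}=\tcev{D_{\hb^+}}$ past $\tcev{D_{w_1}},\dots,\tcev{D_{w_k}}$, one factor at a time, using the non-commuting version of Lemma~\ref{lem:symmetric},
\[
\tcev{D_{g_1}}\circ\tcev{D_{g_2}}-(-1)^{|g_1||g_2|}\,\tcev{D_{g_2}}\circ\tcev{D_{g_1}}=\tcev{D_{\{g_1,g_2\}}}\qquad(g_1,g_2\in\PP^+),
\]
which follows by the same routine computation. As the $w_i$ pairwise Poisson commute, iterating this yields
\[
\left(e^\fb\tcev{D^+}\right)\tcev{D}_{W}=\sum_{I\subseteq\{1,\dots,k\}}(-1)^{\boxtimes_I}\left(e^\fb\tcev{D}_{w_{I^c}}\right)\tcev{D_{\{\hb^+,w_I\}}},
\]
where $w_{I^c}$ is the $\odot$-product of the $w_j$, $j\notin I$, in increasing order, where $\{\hb^+,w_I\}:=\{\dots\{\hb^+,w_{i_1}\},\dots\},w_{i_{|I|}}\}$ (well defined by the Jacobi identity and the commutativity of the $w_i$), and where $(-1)^{\boxtimes_I}$ is a reordering sign. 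On the other hand, the defining formula \eqref{eq:coder} for $D^+$ gives $D^+(W)=\sum_I(-1)^{\boxtimes_I}\vect{(\hb^+)^{|I|}}(w_I)\odot w_{I^c}$, and $\vect{(\hb^+)^{|I|}}(w_I)$ is precisely the part of $\{\hb^+,w_I\}$ of $p^+$-degree zero. Setting $g_I:=\{\hb^+,w_I\}-\vect{(\hb^+)^{|I|}}(w_I)$, which lies in $\ol{\PP}^+$ for every $I$ (with $g_\emptyset=\hb^+$), the two sides of the first identity therefore differ, before the substitution $|_{p^+=0}$, by $\sum_I(-1)^{\boxtimes_I}(e^\fb\tcev{D}_{w_{I^c}})\tcev{D_{g_I}}$, and the proof is finished by the observation that $(X\tcev{D_g})|_{p^+=0}=0$ for any $g\in\ol{\PP}^+$ and any $X$ in (the completion of) $S\LL$. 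Indeed, writing $g=\sum_s g_s$ with $g_s$ of pure $q^+$-degree $s$, each $g_s$ again lies in $\ol{\PP}^+$, so every monomial of $g_s$ carries a $p^+$-variable; in $(c_1\odot\dots\odot c_s)\tcev{g_s}=\{c_1,\{\dots,\{c_s,g_s\}\dots\}\}$ every bracket differentiates a factor $c_i\in\LL$ in the $p^+$-variables and the remaining iterated expression, which still carries $g_s$, in the $q^+$-variables, so the $p^+$-variables coming from $g_s$ are never differentiated; hence every term produced by $\tcev{D_{g_s}}$ retains a nontrivial $p^+$-monomial and dies at $p^+=0$.

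For the second identity two structural observations suffice. First, $\hb^-\in\PP^-$ involves only the variables $p^-$ and $q^-$, so $\vect{D^-}=\vect{D_{\hb^-}}$ never touches the $p^+$-variables: it commutes with the substitution $|_{p^+=0}$, and on $S\AA^-=(S\LL)|_{p^+=0}$ it restricts to the codifferential $D^-$; thus $D^-(X|_{p^+=0})=(\vect{D^-}X)|_{p^+=0}$ for all $X\in S\LL$. Second, since $\PP^-$ and $\PP^+$ are built from disjoint sets of variables, the left action of $\PP^-$ on $S\LL$ (via $\vect{D}$) commutes with the right action of $\PP^+$ (via $\tcev{D}$) — a bimodule-type compatibility ultimately resting on $\{g^-,g^+\}=0$ for $g^-\in\PP^-$, $g^+\in\PP^+$ — so in particular $\vect{D^-}\circ\tcev{D}_{W}=\tcev{D}_{W}\circ\vect{D^-}$. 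Applying these two facts to $e^\fb$ gives
\begin{align*}
D^-\!\left(e^\fb\tcev{D}_{W}\big|_{p^+=0}\right)
&=\left(\vect{D^-}\!\left(e^\fb\tcev{D}_{W}\right)\right)\!\big|_{p^+=0}\\
&=\left(\left(\vect{D^-}e^\fb\right)\tcev{D}_{W}\right)\!\big|_{p^+=0},
\end{align*}
which is the second identity.

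The conceptual ingredients — the vanishing observation, and the descent and commutativity of $\vect{D^-}$ — are all easy, and I expect the real work to be the bookkeeping. One must verify that the reordering signs produced by the iterated commutators in the first identity coincide with the unshuffle signs of \eqref{eq:coder}, that the various Koszul signs (including in the commutativity used for the second identity) combine to leave no residual sign, and that the combinatorial prefactors — the $1/k!$ in $e^\fb$, the $1/r!$ factors in expressions like $\vect{g^r}(\tfrac1{r!}\fb^{\odot r})$, and the rest — are tracked consistently through every rearrangement. That sign- and coefficient-chasing is where the care is needed.
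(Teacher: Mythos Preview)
Your approach is correct. The paper itself offers no proof of this lemma at all --- the statement carries a trailing $\qed$ and the text moves on directly to its consequences --- so there is nothing to compare against; you have supplied the ``routine computation'' that the paper leaves to the reader.

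Your two arguments are the natural ones. For the first identity, the commutator formula $\tcev{D_{g_1}}\circ\tcev{D_{g_2}}-(-1)^{|g_1||g_2|}\tcev{D_{g_2}}\circ\tcev{D_{g_1}}=\tcev{D_{\{g_1,g_2\}}}$ is indeed the right-action analogue of Lemma~\ref{lem:codiff}, and your iterated expansion over subsets $I$ is correct because the $\tcev{D_{w_i}}$ pairwise commute. The identification of the $p^+$-degree-zero part of $\{\hb^+,w_I\}$ with $\vect{(\hb^+)^{|I|}}(w_I)$ is right, since bracketing with a pure $q^+$-word lowers $p^+$-degree by exactly one. Your vanishing observation is also correct: because $c_i\in\LL$ carries no $q^+$-variables, in each bracket $\{c_i,\,\cdot\,\}$ only the $\partial_{p^+}c_i\cdot\partial_{q^+}(\cdot)$ term survives, so the $p^+$-factors originating from $g_s$ are never differentiated. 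For the second identity, the two structural facts you isolate --- that $\vect{D^-}$ acts only through $\partial_{q^-}$ on elements of $\LL$ and hence commutes with $|_{p^+=0}$, and that the left $\PP^-$-action and right $\PP^+$-action commute because they live on disjoint sets of variables --- are exactly what is needed.

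Your closing caveat is fair: the residual work is pure sign-and-coefficient bookkeeping (matching the $(-1)^{\boxtimes_I}$ to the unshuffle signs in \eqref{eq:coder}, and tracking Koszul signs through the commutations), but there is no hidden obstacle there.
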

It follows that in order for the coalgebra map $\Phi:S(\AA^+[2N]) \to S(\AA^-[2N])$ associated to an element $\fb\in \ol{\LL}$ of degree $2N$ as above to commute with the codifferentials $D^\pm$ determined by $\hb^\pm\in \ol{\PP}^\pm$, we must have
$$
\vect{D^-}e^\fb = e^\fb\tcev{D^+}.
$$
By an earlier remark, we have
$$
\vect{D^-}e^\fb = \hb^-|_{L_\fb} \odot e^\fb \qquad \text{and} \qquad
e^\fb\tcev{D^+} = e^\fb \odot \hb^+|_{L_\fb}.
$$
So we find
\begin{lem}\label{lem:char_morphism}
Suppose $\hb^\pm \in \ol{\PP}^\pm$ are Poisson self-commuting elements of degree $2N-1$, and $\fb\in \ol\LL$ is an element of degree $2N$. Then the coalgebra map $\Phi:\ol{S}(\AA^+[2N]) \to \ol{S}(\AA^-[2N])$ associated to $\fb$ intertwines the codifferentials $D^\pm$ induced by $\hb^\pm$ if and only if
\begin{equation*}
\hb^-|_{L_\fb}= \hb^+|_{L_\fb}.  \hfill{\qed}
\end{equation*}
\end{lem}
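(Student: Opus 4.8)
The statement collects the observations of the paragraphs leading up to it into a clean equivalence, so the plan is simply to make the two implications precise. Recall from Lemma~\ref{lem:morphism} that $\Phi$ is the linear extension of $v\mapsto\bigl((e^\fb)\tcev D_v\bigr)|_{p^+=0}$ from the monomial basis of $\ol{S}(\AA^+[2N])$, and that, since $\Phi$ has degree $0$ while $D^\pm$ have degree $-1$, ``$\Phi$ intertwines $D^\pm$'' means exactly $\Phi\circ D^+=D^-\circ\Phi$, with no Koszul sign.

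For the ``if'' direction I would argue directly. Assume $\hb^-|_{L_\fb}=\hb^+|_{L_\fb}$. Combining this with the two identities $\vect{D^-}e^\fb=\hb^-|_{L_\fb}\odot e^\fb$ and $e^\fb\tcev{D^+}=e^\fb\odot\hb^+|_{L_\fb}$ recorded just above the statement, together with the fact that every homogeneous component of $e^\fb$ has degree a multiple of $2N$ — hence even, so $e^\fb$ commutes with everything under $\odot$ — yields the single identity $\vect{D^-}e^\fb=e^\fb\tcev{D^+}$ in $S\LL$ (or rather its word-length completion). Feeding this into both parts of Lemma~\ref{lem:comp} identifies, for every monomial $w_\bullet:=w_1\odot\dots\odot w_k$, the element $\Phi\bigl(D^+(w_\bullet)\bigr)$ with $\bigl((e^\fb\tcev{D^+})\tcev D_{w_\bullet}\bigr)|_{p^+=0}$ and $D^-\bigl(\Phi(w_\bullet)\bigr)$ with $\bigl((\vect{D^-}e^\fb)\tcev D_{w_\bullet}\bigr)|_{p^+=0}$; as these now coincide, $\Phi\circ D^+=D^-\circ\Phi$ follows by linearity.

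For the ``only if'' direction, Lemma~\ref{lem:comp} (read in the same way) shows that $\Phi\circ D^+=D^-\circ\Phi$ is \emph{equivalent} to the family of identities $\bigl((\vect{D^-}e^\fb-e^\fb\tcev{D^+})\tcev D_{w_\bullet}\bigr)|_{p^+=0}=0$, one for each monomial $w_\bullet$ from $\ol{S}(\AA^+[2N])$. By the two recorded identities together with the commutativity of $e^\fb$, the tested element equals $(\hb^-|_{L_\fb}-\hb^+|_{L_\fb})\odot e^\fb$, with $\hb^\pm|_{L_\fb}\in\LL=S^1\LL$, so the point is that such an element can be annihilated by all the operators $x\mapsto(x\,\tcev D_{w_\bullet})|_{p^+=0}$ only if $\hb^-|_{L_\fb}-\hb^+|_{L_\fb}=0$. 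This is where the remaining (routine) work lies: Poisson bracket with a polynomial in the $q^+$-variables is differentiation in the $p^+$-variables, so iterating such brackets and then setting $p^+=0$ reads off the Taylor coefficients in the $p^+$-variables; an induction on $p^+$-order (using that $e^\fb$ restricts to $1$ at $p^+=0$ because $\fb\in\ol\LL$) then shows that the whole family forces $(\hb^-|_{L_\fb}-\hb^+|_{L_\fb})\odot e^\fb=0$. Finally, $e^\fb$ has word-length-zero component $1\in\Lambda$ and is therefore invertible in the completed symmetric algebra; equivalently, the word-length-one component of $(\hb^-|_{L_\fb}-\hb^+|_{L_\fb})\odot e^\fb$ is exactly $\hb^-|_{L_\fb}-\hb^+|_{L_\fb}$, so this difference vanishes, which is the claimed identity $\hb^-|_{L_\fb}=\hb^+|_{L_\fb}$.

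The main obstacle is the middle of the last paragraph: promoting the family of ``test'' identities supplied by Lemma~\ref{lem:comp} to the single operator identity $\vect{D^-}e^\fb=e^\fb\tcev{D^+}$. Everything else is bookkeeping — the formula for $\Phi$ from Lemma~\ref{lem:morphism}, the two substitution identities recorded above the statement, the evenness of $e^\fb$, and its invertibility in the relevant completion.
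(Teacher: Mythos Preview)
Your argument is correct and follows exactly the route the paper takes: the lemma carries a $\qed$ in the text, with the actual proof being the short paragraph immediately preceding it, which derives the equivalence from Lemma~\ref{lem:comp} together with the two substitution identities $\vect{D^-}e^\fb=\hb^-|_{L_\fb}\odot e^\fb$ and $e^\fb\tcev{D^+}=e^\fb\odot\hb^+|_{L_\fb}$. The only difference is that you spell out the ``only if'' direction (promoting the family of test identities to the single operator identity $\vect{D^-}e^\fb=e^\fb\tcev{D^+}$), whereas the paper simply asserts ``we must have'' this identity; your Taylor-coefficient/induction sketch and the observation that the $S^1\LL$-component of $g\odot e^\fb$ is $g$ are the natural way to justify that assertion.
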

In particular, under the condition formulated in the lemma, $\Phi$ can be interpreted as an $L_\infty$-morphism between the $L_\infty$ algebra structures on $ \AA^\pm[2N+1]$ induced by $D^\pm$.

\subsection{Composition of morphisms}

Suppose we have three structures $(\AA^+,\hb^+)$, $(\AA,\hb)$ and $(\AA^-,\hb^-)$ as above, and consider $\LL^+:=SC \otimes S(C^+[-2N])^*$ and $\LL^-:=SC^- \otimes S(C[-2N])^*$. According to Lemma~\ref{lem:morphism}, elements $\fb^+\in \ol\LL^+$ and $\fb^-\in \ol\LL^-$ define morphisms of coalgebras
$$
\Phi^+:\ol{S}(\AA^+[2N]) \to \ol{S}(\AA[2N]) \qquad \text{and} \qquad 
\Phi^-:\ol{S}(\AA[2N]) \to \ol{S}(\AA^-[2N]).
$$
Our next lemma gives a description of the composition $\Phi:=\Phi^-\circ \Phi^+$. To this end, let $\LL:=SC^- \otimes S(C^+[-2N])^*$, and denote by $L \subseteq T^*C^-\oplus T^*C^*\oplus T^*(C^+)^*$ the submanifold given by the equations
$$
p_\gamma = \kappa_\gamma \frac {\del \fb^+}{\del q_\gamma} \qquad \text{and} \qquad
q_\gamma = \kappa_\gamma \frac {\del \fb^-}{\del p_\gamma},
$$
where $\gamma$ runs over the index set $\Gamma$ associated to $C$.

Given $\fb^+\in \LL^+$ and $\fb^-\in \LL^-$, we now  follow \cite{EGH00} and define $\fb\in \LL$ as
\begin{equation}\label{eq:comp_mor}
\fb(p^+,q^-) := \left(\fb^+(p^+,q) +\fb^-(p,q^-) -\sum_\gamma \frac 1{\kappa_\gamma} q_\gamma p_\gamma \right)_{|L},
\end{equation}
where $L$ is as above.

For what follows, it is useful to understand the mechanics of this expression. We start by considering the first term. Restricting $\fb^+$ to $L$ means replacing each occurence of a $q$-variable by a corresponding derivative of $\fb^-$. As we have argued in the previous subsection in similar circumstances, this has the  same effect as taking $r$ brackets with $\fb^-$. That in turn might introduce ``free'' instances of the variables $p$. Replacing these corresponds to taking new iterated brackets with $\fb^+$ again. This continues until no new $p$s or $qs$ have been introduced. The combinatorics of these iterated replacements can be captured by a tree with vertices labelled by $\fb^+$ and $\fb^-$, and an edge for each instance of a bracket ``connecting'' two such terms. If such a tree has $r$ vertices labelled with $\fb^+$, then it appears $r$ times in $\fb^+(p^+,q)_{|L}$, as it can be built starting from any of these vertices. Similarly, if the tree has $n$ vertices labelled $\fb^-$, then it appears $n$ times in $\fb^-(p,q^-)_{|L}$. Finally, the tree can also be built starting from each of its $\tau$ edges, so it will appear exactly $\tau$ times in the last term. In summary, each such tree $T$ is counted in the expression \eqref{eq:comp_mor} exactly $n+r-\tau=\chi(T)=1$ time.

%
\begin{lem}\label{lem:comp_mor}
Let $\Phi^\pm$ be morphisms as above associated to elements $\fb^\pm \in \ol\LL^\pm$. Then the composition $\Phi:=\Phi^-\circ \Phi^+$ is associated to the element $\fb\in \ol\LL$ given by \eqref{eq:comp_mor}.
\end{lem}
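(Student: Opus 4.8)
The plan is to compute $\Phi = \Phi^-\circ\Phi^+$ directly from the defining formula of Lemma~\ref{lem:morphism}, namely $\Phi(w_1\odot\dots\odot w_r) = \left((e^\fb)\tcev{D}_{w_1\odot\dots\odot w_r}\right)_{|p^+=0}$, and show that the answer coincides with the one produced by the element $\fb$ of \eqref{eq:comp_mor}. Since both $\Phi^\pm$ are coalgebra morphisms, so is $\Phi$, and a coalgebra morphism of this type is determined by its $\phi$-components $\phi^r:S^r\AA^+\to S^1\AA^-$ (via $\Phi=e^\phi$); so it suffices to match these components. Using the explicit formula \eqref{eq:mor_explicit}, the component $(\phi^-)^m$ of $\Phi^-$ applied to the output of $(\phi^+)^{r_1},\dots,(\phi^+)^{r_m}$ unfolds into an iterated-bracket expression built from some copies of $\fb^-$ and some copies of $\fb^+$, where the $\fb^+$'s come in through the restriction $|_{p^+=0}$ of $\Phi^-$ hitting the $q$-variables produced by $\Phi^+$, and vice versa. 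The combinatorics here is exactly the tree combinatorics already described in the paragraph preceding Lemma~\ref{lem:comp_mor}: each term is indexed by a connected tree $T$ with vertices labelled $\fb^+$ or $\fb^-$ and edges recording Poisson brackets, and the total word length consumed forces the tree to be connected (Euler characteristic $1$), exactly as in the proof of Lemma~\ref{lem:morphism}.

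The key step is the bookkeeping of multiplicities. On the composition side, a given tree $T$ with $r$ vertices labelled $\fb^+$ and $n$ vertices labelled $\fb^-$ arises with a multiplicity coming from the factorials $\frac1{(\tau-r+1)!}$-type normalizations in \eqref{eq:mor_explicit} together with the freedom of which $\fb^-$-vertex is the ``root'' of each sub-application of $\phi^-$ and which $\fb^+$-vertex roots each $\phi^+$. On the other side, restricting the expression $\fb^+(p^+,q)+\fb^-(p,q^-)-\sum_\gamma\frac1{\kappa_\gamma}q_\gamma p_\gamma$ to $L$ and then applying $\tcev{D}_{w_1\odot\dots\odot w_r}$ and setting $p^+=0$ produces the same trees, and the explicit count carried out in the paragraph before Lemma~\ref{lem:comp_mor} shows that each tree $T$ is counted $n+r-\tau=\chi(T)=1$ time. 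I would make this precise by introducing, for each tree $T$, the associated iterated-bracket value $B_T(w_1\odot\dots\odot w_r)\in\AA^-$, checking that $B_T$ is well-defined independently of the order in which brackets are resolved (this uses the Jacobi identity and the mutual Poisson-commutativity of $p$'s and of $q$'s, exactly as in Lemma~\ref{lem:codiff} and Lemma~\ref{lem:symmetric}), and then verifying that both $\Phi$ and the morphism associated to $\fb$ equal $\sum_T B_T$ with coefficient $1$.

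A subtlety worth isolating, flagged already in the remark after the proof of Lemma~\ref{lem:morphism}, is that distinct trees can contribute to the same unexpanded iterated-bracket expression, so one must be careful to organize the matching at the level of fully expanded bracket monomials (equivalently, at the level of graphs with a fixed combinatorial type), not at the level of the shorthand $\tcev{D}$-notation. Once the contributions are indexed by fully resolved trees, the signs $(-1)^\boxtimes$ on both sides agree because they are dictated by the same Koszul reordering of the $w_i$ and the $\fb^\pm$-factors (the even degree shifts by $2N$ being harmless, as noted after \eqref{eq:coder}). The main obstacle I anticipate is precisely this multiplicity-and-sign bookkeeping: proving rigorously that the map ``term in $\Phi^-\circ\Phi^+$'' $\mapsto$ ``tree $T$'' is well-defined and that the fibers have the claimed sizes, so that the factorial normalizations cancel to leave coefficient $1$. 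Everything else — that $\fb\in\ol\LL$, that the degree is $2N$, that $\Phi$ is a coalgebra morphism fixing $\id$ — follows formally from Lemma~\ref{lem:morphism} applied to $\fb$, from $\fb^\pm\in\ol\LL^\pm$, and from the degree computation already in the proof of Lemma~\ref{lem:morphism}.
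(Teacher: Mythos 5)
Your proposal is correct and matches the paper's argument in all essentials: both reduce the claim to the tree-combinatorial counting argument laid out in the paragraph preceding the lemma, where connectedness forces Euler characteristic $1$ so that each tree contributes with coefficient exactly $n+r-\tau=1$, and both rely on the remark after Lemma~\ref{lem:morphism} to do the bookkeeping at the level of fully resolved trees rather than shorthand bracket expressions. The paper streamlines the reduction by first rewriting $\Phi^-\circ\Phi^+$ so that the input words $w_i$ are decoupled, reducing everything to the single identification $\left(\left(e^{\fb^-}\right)\tcev{D}_{e^{\fb^+}}\right)_{|p=0}=e^\fb$ before invoking the tree count; you instead carry the $w_i$ through and compare $\phi$-components via \eqref{eq:mor_explicit}, which is a notational variant of the same argument rather than a different idea.
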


\begin{proof}[Sketch of proof]
From the definitions, we find that
\begin{align*}
\Phi^-(\Phi^+(w_1 \odot \dots \odot w_k)) &= \Phi^-\left(\left((e^{\fb^+})\tcev{D}_{w_1 \odot \dots \odot w_r}\right)_{|p^+=0}\right)\\
&= \left(\left(e^{\fb^-}\right)\tcev{D}_{\left((e^{\fb^+})\tcev{D}_{w_1 \odot \dots \odot w_r}\right)_{|p^+=0}}\right)_{|p=0}.
\end{align*}
With an obvious extension of our previous notations, one checks that this can be rewritten as
$$
\left(\left(\left(e^{\fb^-}\right)\tcev{D}_{e^{\fb^+}}\right)_{|p=0}\tcev{D}_{w_1 \odot \dots \odot w_r}\right)_{|p^+=0}.
$$
Hence it remains to understand the term $\left(\left(e^{\fb^-}\right)\tcev{D}_{e^{\fb^+}}\right)_{|p=0}$. As in the outlined proof of Lemma~\ref{lem:morphism}, a typical term
$$
\left(\frac 1{n!} (\fb^-)^{\odot n}\right)\tcev{D}_{\frac 1{r!} (\fb^+)^{\odot r}}
$$
corresponds to iterated brackets whose combinatorics can be described by a union of trees with $n$ vertices labelled by $\fb^-$ and $r$ vertices labelled by $\fb^+$, and whose $\tau$ edges each correspond to a bracket where two of the terms interact. In order to find $\fb$, we need to find the connected building blocks, i.e. those summands where the underlying graph is a tree. As we have argued just before the statement of the lemma, the expression \eqref{eq:comp_mor} precisel captures the sum of all such trees (with varying $n$, $r$ and $\tau$). 
\end{proof}

\subsection{Maurer-Cartan elements and twisted structures}
\label{ssec:mc}

In this subsection, we assume that the ground ring $\Lambda$ is {\em filtered}, in the sense that for each $\rho\in \R$ we have an ideal $\Lambda_\rho \subset \Lambda$ such that for all $\rho_1,\rho_2\in \R$ we have
\begin{itemize}
\item if $\rho_1<\rho_2$ then $\Lambda_{\rho_1} \supseteq \Lambda_{\rho_2}$, and
\item $\Lambda_{\rho_1} \cdot \Lambda_{\rho_2} \subseteq \Lambda_{\rho_1+\rho_2}$.
\end{itemize}
Assigning all generators filtration degree $0$, we get an induced filtration $\AA_\rho$ on $\AA$, which satisfies $\AA_{\rho_1} \cdot \AA_{\rho_2} \subseteq \AA_{\rho_1+\rho_2}$. We denote by $\AA^\comp$ the completion of $\AA$ with respect to this filtration.

Consider $\AA$ and an element $\hb\in \ol\PP$ of degree $2N-1$ satisfying $\{\hb,\hb\}=0$ as before, and note that the coderivation $D=D_\hb$ naturally extends to $\ol{S}\AA^\comp[2N]$.
For a homogeneous element $a\in \AA^\comp$ of even degree and positive filtration degree, we can consider the element
$$
e^a = 1+ \sum_{k=1}^\infty \frac {a^{\odot k}}{k!} \in S \AA^\comp[2N],
$$
where $S\AA^\comp[2N]$ is also completed with respect to its wordd filtration. One easily checks that for two homogeneous elements $a,b\in \AA^\comp$ of even degrees one has
\begin{equation}\label{eq:exp}
e^{a+b}=e^a\odot e^b.
\end{equation}
\begin{defn}
An element $\ab \in \AA^\comp$ of degree $2N$ and positive filtration degree is called a {\em Maurer-Cartan element} for $(\AA,\hb)$ if
$$
D\left(e^\ab\right)=0.
$$ 
\end{defn}
Given a Maurer-Cartan element  $\ab\in \AA^\comp$, one straightforwardly checks that the map $\Psi^\ab: \ol{S}\AA^\comp[2N] \to \ol{S}\AA^\comp[2N]$, defined on $w=w_1 \odot \dots \odot w_r$ as
\begin{equation}\label{eq:psi^a}
\Psi^\ab(w):=e^\ab \odot w,
\end{equation}
is a map of coalgebras (but note that $\Psi^\ab(\id)=e^\ab \odot \id$). In view of \eqref{eq:exp}, $\Psi^\ab$ is an isomorphism with inverse $\left(\Psi^\ab\right)^{-1}=\Psi^{-\ab}$. It follows that
$$
D^\ab:= \Psi^{-\ab} \circ D \circ \Psi^\ab
$$
is again a coderivation on $\ol{S}\AA^\comp[2N]$ of degree $-1$. 

\begin{defn}
The coderivation $D^\ab$ on $\ol{S}(\AA^\comp[2N])$ as above is called the {\em twisting by $\ab$} of the original coderivation $D$.
\end{defn}

\begin{rem}
Going through the definitions, one can check that the coderivation $D^\ab$ is induced by the element
$$
\hb^\ab :=  \hb + \{\hb,\ab\} + \frac 12 \{\{\hb,\a\},\ab\} + \frac 16 \{\{\{ \hb,\ab\},\ab\},\ab\} + \dots .
$$
The condition that $\ab$ is a Maurer-Cartan element precisely guarantees that $\hb^\ab\in \ol\PP^\comp$, i.e. $\hb^\ab|_{p=0}=0$, or equivalently $D^\ab(\id)=0$.
\end{rem}

\subsection{Maurer-Cartan elements and morphisms}

There are two ways in which Maurer-Cartan elements interact with morphisms.

\subsubsection{Morphisms from $\fb\in \LL$}
\label{ssec:gen_mor}

In the context of considering nonexact symplectic cobordisms in SFT, one comes across element $\fb \in \LL$ which are not in $\ol\LL$. In order to make sense of things in this context, we need to assume that everything is filtered. Moreover, it turns out to be useful to split $\fb$ as
$$
\fb=\fb^0+\fb',
$$
where as before $\fb^0=\fb|_{p^+=0}$ is the part of $\fb$ which is independent of the $p^+$-variables. We assume that $\fb^0$ has positive filtration degree. Going through the argument for defining a morphism $\Phi$ as in Lemma~\ref{lem:morphism}, we find that not much changes except for the fact we no longer have $\Phi(\id)=\id$, but rather
$$
\Phi(\id)=e^{\fb^0}\odot \id\in S\AA^{-,\comp}[2N].
$$
It turns out that the equation
\begin{equation}\label{eq:gen_mor}
\hb^+|_{L_\fb}=\hb^-|_{L_\fb}
\end{equation}
is still equivalent to 
$$
\vect{D^-}e^\fb = e^\fb\tcev{D^+}
$$
in this setting, and considering the part of this equation of degree $0$ in $p^+$ gives
$$
\vect{D^-}e^{\fb^0}=0.
$$
In particular, under the assumption \eqref{eq:gen_mor}, one can view $\fb^0$ as a Maurer-Cartan element in $(\AA^{-,\comp}[2N],D^-)$. The rest of the argument in the previous subsection then goes through to show that the morphism of coalgebras $\Phi':\ol{S}(\AA^{+,\comp}[2N]) \to \ol{S}(\AA^{-,\comp}[2N])$ defined using $\fb':=\fb-\fb^0$ intertwines the codifferentials $D^+$ and $D^{-,\fb^0}$ induced by the elements $\hb^+\in \PP^+$ and $\hb^{-,\fb^0}\in \PP^{-,\comp}$.

\subsubsection{Pushforward of Maurer-Cartan elements}

Let $(\AA^\pm,\hb^\pm)$ be as before, and suppose $\fb\in \ol \LL$ satisfies $\hb^+|_{L_\fb}=\hb^-|_{L_\fb}$, so that the morphism associated to $\fb$ intertwines the codifferentials $D^\pm$ associated to $\hb^\pm$.

\begin{lem}
Let $\Phi:\ol S(\AA^{+,\comp}[2N]) \to \ol S(\AA^{-,\comp}[2N])$ be the morphism associated to $\fb \in \ol\LL$ as above, and let $\ab \in \AA^{+,\comp}$ be a Maurer-Cartan element. Then there exists a unique Maurer-Cartan element $\fb_*(\ab)\in \AA^{-,\comp}[2N])$ such that
$$
\Phi(e^\ab) = e^{\fb_*(\ab)}.
$$
\hfill$\qed$
\end{lem}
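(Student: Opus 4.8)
The plan is to exploit the fact that $\Phi$ is a morphism of coalgebras commuting with the codifferentials, together with the explicit form $\Phi = e^\phi$ for a collection of maps $\phi^r : S^r\AA^+ \to \AA^-$. First I would observe that since $\Phi$ is a coalgebra morphism with $\Phi(\id)=\id$, and $e^\ab$ is a group-like element of the completed symmetric coalgebra (i.e.\ $\Delta(e^\ab) = e^\ab \otimes e^\ab$ in the appropriate sense, which is exactly the content of \eqref{eq:exp}), the image $\Phi(e^\ab)$ is again group-like. A group-like element of $\ol S(\AA^{-,\comp}[2N])$ (suitably completed and normalized so that its $S^0$-component is the unit) is necessarily of the form $e^b$ for a unique $b \in \AA^{-,\comp}$ of the appropriate degree; this uniqueness is where \eqref{eq:exp} is used again, via $e^{b}=e^{b'}\iff b=b'$. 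This $b$ is what we call $\fb_*(\ab)$, and one reads off its degree $2N$ and positive filtration degree from the corresponding properties of $\ab$ together with the fact that each $\phi^r$ is degree- and filtration-non-decreasing (degree $0$ after the shift, and built from $\fb\in\ol\LL$ which has nonnegative filtration degree on the relevant piece).

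Concretely, I would compute $\Phi(e^\ab) = e^\phi(e^\ab)$. Since $e^\ab = \sum_k \frac{1}{k!}\ab^{\odot k}$, applying the coalgebra morphism $e^\phi$ term by term and regrouping according to which inputs get fed into each $\phi^r$-factor yields
$$
\Phi(e^\ab) \;=\; \exp\!\Big( \sum_{r\ge 1} \tfrac{1}{r!}\,\phi^r(\ab^{\odot r}) \Big),
$$
so explicitly $\fb_*(\ab) = \sum_{r\ge 1} \frac{1}{r!}\phi^r(\ab^{\odot r})$. This is the standard formula for the pushforward of a Maurer--Cartan element along an $L_\infty$-morphism, and the infinite sum converges because $\ab$ has positive filtration degree and $\Lambda$ is filtered. (Alternatively, using Lemma~\ref{lem:morphism} and the remark identifying $\vect{g^r}(\frac{1}{r!}\fb^{\odot r})$ with restriction to $L_\fb$, one can write $\fb_*(\ab)$ directly as a restriction-type expression, but the $L_\infty$ formula is cleanest.)

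It then remains to check that $\fb_*(\ab)$ is actually a Maurer--Cartan element, i.e.\ $D^-(e^{\fb_*(\ab)})=0$. This is where the hypothesis $\hb^+|_{L_\fb}=\hb^-|_{L_\fb}$ — equivalently (Lemma~\ref{lem:char_morphism}, Lemma~\ref{lem:comp}) the intertwining $\Phi\circ D^+ = D^-\circ \Phi$ — does the work: we have
$$
D^-\big(e^{\fb_*(\ab)}\big) \;=\; D^-\big(\Phi(e^\ab)\big) \;=\; \Phi\big(D^+(e^\ab)\big) \;=\; \Phi(0) \;=\; 0,
$$
using that $\ab$ is Maurer--Cartan for $(\AA^+,\hb^+)$ in the third equality. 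Strictly one must be a little careful that all of this is legitimate on the completions $\AA^{\pm,\comp}$ and on the word-length completion of $\ol S$, but $D^\pm$ and $\Phi$ are all continuous for these filtrations (the intertwining identity of Lemma~\ref{lem:comp} was established at the uncompleted level and passes to completions), so no genuine difficulty arises here. Uniqueness of $\fb_*(\ab)$ is immediate from the injectivity of $b\mapsto e^b$ noted above.

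The main obstacle I expect is not any single computation but rather the bookkeeping of \emph{completions and convergence}: one has to make sure $\Phi(e^\ab)$ is well-defined as an element of the completed $\ol S(\AA^{-,\comp}[2N])$ (the $S^0$-component contributes the unit, the rest assembles into $e^{\fb_*(\ab)}$), that the sum defining $\fb_*(\ab)$ lies in $\AA^{-,\comp}$, and that ``group-like implies exponential'' holds in this completed setting. All of these follow formally once one checks that $\phi^r$ strictly increases (or at least does not decrease) filtration degree for $r\ge 2$ while $\phi^1$ preserves it, combined with the positive filtration degree of $\ab$; this is the one place where a small argument, rather than a citation, is needed.
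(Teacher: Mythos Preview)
Your proposal is correct and follows essentially the same approach as the paper: the paper in fact gives no proof at all (the lemma carries a bare $\qed$), and immediately afterwards records precisely your formula $\fb_*(\ab)=\sum_{r\ge 1}\frac{1}{r!}\phi^r(\ab^{\odot r})$ as the explicit description. Your argument---$\Phi$ preserves group-likes, the explicit $L_\infty$-pushforward formula, and the Maurer--Cartan check via $D^-\circ\Phi=\Phi\circ D^+$---is the standard one and more detailed than anything the paper supplies; the only cosmetic point is that the paper's $\Phi$ is defined on $\ol S$ while $e^\ab$ lives in the full $S$ (with $S^0$-component $1$), so the identity $\Phi(e^\ab)=e^{\fb_*(\ab)}$ implicitly uses the counital extension $\Phi(1)=1$, which you effectively invoke when matching $S^0$-components.
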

In fact, $\fb_*(\ab)$ can be explicitly described as
$$
\fb_*(\ab)= \sum_{r=1}^\infty \frac 1{r!}\phi^r(a^{\odot r}),
$$
where $\phi^r:S^r(\AA^{+,\comp}[2N]) \to S^1(\AA^{-,\comp}[2N])$ are the defining components of $\Phi$.

Consider the self-maps
$$
\Psi^\ab:\ol S(\AA^{+,\comp}[2N]) \to \ol S(\AA^{+,\comp}[2N])
$$
and
$$
\!\!\!\!\!\Psi^{\fb_*(\ab)}:\ol S(\AA^{-,\comp}[2N]) \to \ol S(\AA^{-,\comp}[2N])
$$
associated to the Maurer-Cartan elements $\ab$ and $\fb_*(\ab)$ as in \eqref{eq:psi^a}, and define the ``$\ab$-twisted morphism'' $\Phi^\ab:\ol S(\AA^{+,\comp}[2N]) \to \ol S(\AA^{-,\comp}[2N])$ as
$$
\Phi^\ab:=\Psi^{-\fb_*(\ab)} \circ \Phi \circ \Psi^{\ab}.
$$
It is now straightforward to check from the definitions that the morphism $\Phi^\ab$ intertwines the differentials $D^\ab$ and $D^{\fb_*(\ab)}$ and satisfies $\Phi^\ab(\id)=\id$.

\begin{rem}
The morphism $\Phi^\ab$ is associated to the element $\fb^\ab \in \ol\LL$ which can be described as follows. The element $(e^\fb)\tcev{D}_{e^\ab}\in S\LL$ is itself of the form $e^\gb$ for some $\gb\in \LL$, with leading term $\gb^0=\fb_*(\ab)$. Then, just as above, $\fb^\ab$ simply consists of all the terms in $\gb$ containing at least one $p^+$, i.e.
$$
\fb^\ab= \gb - \fb_*(\ab).
$$
\end{rem}

\subsection{Linearization}
\label{ssec:lin}

In this subsection, we discuss what can be said about $(\AA,\hb)$ assuming that $\hb\in \ol\PP$ satisfies the stronger assumption $\hb\in \wh{\PP}$, i.e. $\hb|_{p=0}=0=\hb_{q=0}$.

Expanding the equation
$$
\{\hb , \hb\}=0
$$
in degrees in $p$ and $q$, we find that
\begin{equation}\label{eq:rel_lin}
\sum_{r_1+r_2=r}\sum_{s_1+s_2=s} \{\hb^{r_1}_{s_1}, \hb^{r_2}_{s_2}\}=0
\end{equation}
for all $r,s \ge 2$.
In particular, the operations
$m^r_s:S^rC \to S^sC$ by
$$
m^r_s(q_{\gamma_1}\odot\dots\odot q_{\gamma_r}):=\{\dots\{\hb^r_s,q_{\gamma_1}\},\dots\},q_{\gamma_r}\}
$$
have degrees $(1-r)2N-1$ and satisfy a series of quadratic relations which reflect \eqref{eq:rel_lin}. In particular, the map $\del=m^1_1:C \to C$ is a boundary operator of degree $-1$, and denoting by $\wh{m^r_s}:\ol{S}C \to \ol{S}C$ the extensions of the above operations, we find
\begin{equation}
\sum_{r_1+r_2=r}\sum_{s_1+s_2=s} \wh{m^{r_1}_{s_1}} \circ_1 \wh{m^{r_2}_{s_2}}=0,
\end{equation}
where for $n\ge r$ the map $\wh{m^{r_1}_{s_1}}\circ_1 \wh{m^{r_2}_{s_2}}:S^nC \to S^{n-r+s}C$ is given by the composition
$$
S^nC \hookrightarrow S^n\AA \stackrel{\vect{\hb^{r_2}_{s_2}}}{\lra} S^{n-r_2+1}\AA \stackrel{\vect{\hb^{r_1}_{s_1}}}{\lra} S^{n-r+1}\AA \to S^{n-r+s}C.
$$
Here the final map in the chain takes a word of words in the $q$s and interprets it as a single word in the $q$s.

Comparing definitions, we see that (up to degree shifts), this is the structure of a dg algebra over the quadratic dual of the Lie bialgebra dioperad in the sense of Gan\cite{Gan03} on $C$ (see also \cite{Mer06} for a discussion). As Gan proves that the Lie bialgebra dioperad is Koszul, this means that $(C,\{m^r_s\}_{r,s\ge 1})$ is a (graded version of a) BiLie$_\infty$-algebra (in the sense of dioperad-algebras). As substructures, we have $\{m^r_1\}_{r\ge 1}$ giving an $L_\infty$-structure on $C[+2N+1]$ and $\{m^1_s\}_{s\ge 1}$ giving a $coL_\infty$-structure on $C[-1]$.

\subsubsection{Linearizing morphisms and Maurer-Cartan elements}

Suppose $\hb^\pm\in \wh\PP^\pm$ are two Poisson self-commuting elements of degree $2N-1$, defining structures $(\AA^\pm,D^\pm)$ which can be linearized. In this situation, an element $\fb\in \wh\LL$ satisfying $\hb^+|_{L_\fb}=\hb^-|_{L_\fb}$ should still give the correct notion of a morphism between the linearized structures.

Here, rather then discussing the general case, we only treat the minimum necessary for the discussion of the work of Siegel below.
\begin{lem}
\label{lem:linearized}
Let $\hb^\pm \in \wh\PP^\pm$ of degree $2N-1$ satisfying $\{\hb^\pm,\hb^\pm\}=0$ and $\fb\in \wh\LL$ of degree $2N$ satisfying  $\hb^+|_{L_\fb}=\hb^-|_{L_\fb}$ be given. Then
\begin{enumerate}[(i)]
\item The parts $\hb^\pm_1$ linear in the $q$-variables give rise to codifferentials
$$
D^{\pm,\lin}:\ol S(C^\pm[2N]) \to \ol S(C^\pm[2N])
$$
of degree $-1$, which describe $L_\infty$-structures on $C^\pm[-2N-1]$.
\item The parts $\fb_1$ linear in the $q^-$-variables give rise to a morphism
of differential coalgebras
$$
\Phi^\lin:(\ol S(C^+[2N]),D^{+,\lin}) \to (\ol S(C^-[2N]),D^{-,\lin}).
$$
\item If $\ab\in \AA^{+,\comp}$ is a Maurer-Cartan element for $(\AA^{+,\comp},\hb^+)$, then its linear part $\ab_1\in C^\comp$ is a Maurer-Cartan element in the linearized $L_\infty$ structure induced from $\hb^+_1$ on $C^{+,\comp}$.
\end{enumerate}
\hfill$\qed$
\end{lem}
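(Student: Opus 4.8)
The plan is to deduce all three statements by passing, in each case, to the component which is linear in the relevant $q$-variables, the key point being that under the strengthened hypotheses $\hb^\pm\in\wh\PP^\pm$ and $\fb\in\wh\LL$ this truncation is compatible with all the structure maps. The one computational input I would use repeatedly is that the Poisson bracket of a monomial of $p$-degree $r_1$, $q$-degree $s_1$ with one of $p$-degree $r_2$, $q$-degree $s_2$ is a sum of monomials of $p$-degree $r_1+r_2-1$, $q$-degree $s_1+s_2-1$. For (i), write $\hb^\pm_1=\sum_{r\ge1}(\hb^\pm)^r_1$ for the $q$-linear part of $\hb^\pm$. Since every monomial of $\hb^\pm$ has $p$- and $q$-degree $\ge1$, the same holds for $\hb^\pm_1$, which is again homogeneous of degree $2N-1$; and extracting the $q$-degree $1$ component of $\{\hb^\pm,\hb^\pm\}=0$ — which by the above receives contributions only from pairs of $q$-linear summands, i.e. is the case $s=2$ of \eqref{eq:rel_lin} — gives $\{\hb^\pm_1,\hb^\pm_1\}=0$. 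By Lemma~\ref{lem:codiff}, $\hb^\pm_1$ then induces a codifferential $D_{\hb^\pm_1}$ of degree $-1$ on $\ol S(\AA^\pm[2N])$. Because $\hb^\pm_1$ is linear in $q$, one checks on single generators that $\vect{(\hb^\pm)^r_1}$ maps $S^rC^\pm$ into $C^\pm\subseteq\AA^\pm$ (the values being the $m^r_1$), so $D_{\hb^\pm_1}$ preserves the subcoalgebra $\ol S(C^\pm[2N])$; its restriction is the desired $D^{\pm,\lin}$, and the associated $L_\infty$-operations are exactly the $\{m^r_1\}_{r\ge1}$ discussed above.

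For (ii), I would take $\fb_1$ to be the $q^-$-linear part of $\fb$; it lies in $\wh\LL$ and is homogeneous of degree $2N$, so by Lemma~\ref{lem:morphism} it defines a coalgebra morphism $\Phi_{\fb_1}\colon\ol S(\AA^+[2N])\to\ol S(\AA^-[2N])$. Since $\fb_1$ is $q^-$-linear, formula \eqref{eq:mor_explicit} shows that on single generators the arity-$r$ component of $\Phi_{\fb_1}$ equals $\bigl(\fb_1\,\tcev{D}_{q^+_{\gamma_1}\odot\dots\odot q^+_{\gamma_r}}\bigr)_{|p^+=0}\in C^-$; being assembled from these components via $e^{(-)}$, the morphism $\Phi_{\fb_1}$ therefore restricts to a coalgebra morphism $\Phi^{\lin}\colon\ol S(C^+[2N])\to\ol S(C^-[2N])$. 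To see that $\Phi^{\lin}$ intertwines the linearized codifferentials, I would extract the $q^-$-degree $1$ component of the identity $\hb^+|_{L_\fb}=\hb^-|_{L_\fb}$: on the left, substituting $q^+_\gamma=\kappa_\gamma\,\del\fb/\del p^+_\gamma$ into a summand of $\hb^+$ of $q^+$-degree $s\ge1$ yields terms of $q^-$-degree $\ge s$, with equality only for $s=1$ and only using $\fb_1$; on the right, substituting $p^-_\gamma=\kappa_\gamma\,\del\fb/\del q^-_\gamma$ into a summand of $\hb^-$ of $q^-$-degree $s'\ge1$ and $p^-$-degree $\ge1$ yields terms of $q^-$-degree $\ge s'$, again with equality only for $s'=1$ and only using $\fb_1$. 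Hence that component reads $\hb^+_1|_{L_{\fb_1}}=\hb^-_1|_{L_{\fb_1}}$, so Lemma~\ref{lem:char_morphism}, applied to $\hb^\pm_1$ and $\fb_1$, shows $\Phi_{\fb_1}$ intertwines $D_{\hb^+_1}$ and $D_{\hb^-_1}$; restricting everything to $\ol S(C^\pm[2N])$ and using (i) gives the claim.

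For (iii), recall from the remark following the definition of the twisting that the Maurer-Cartan condition $D^+(e^\ab)=0$ amounts to $\hb^{+,\ab}|_{p^+=0}=0$, which on expanding $\hb^{+,\ab}$ reads $\sum_{r\ge1}\tfrac1{r!}\vect{(\hb^+)^r}(\ab^{\odot r})=0$ in $\AA^{+,\comp}$ (the sum converges since $\ab$ has positive filtration degree). I would then project this equation onto $q^+$-degree $1$. The $q^+$-degree $0$ part of $\ab$ contains no $q^+$, hence Poisson-commutes with $\hb^+$ and drops out entirely; and since $\hb^+\in\wh\PP^+$, the bidegree count shows that a term $\vect{(\hb^+)^r}(\ab_{j_1}\odot\dots\odot\ab_{j_r})$ built from homogeneous pieces of $q^+$-degrees $j_i\ge1$ has output of $q^+$-degree $\ge1$, with equality only when all $j_i=1$ and only the $q^+$-linear part of $\hb^+$ is used — in which case it equals $m^r_1(\ab_1^{\odot r})$. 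Thus the projection is $\sum_{r\ge1}\tfrac1{r!}m^r_1(\ab_1^{\odot r})=0$, which is precisely the Maurer-Cartan equation for $\ab_1$ in the linearized $L_\infty$-structure on $C^{+,\comp}$ determined by $\hb^+_1$.

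The whole argument is bookkeeping; the only place where some care is genuinely needed — and where the hypotheses $\hb^\pm\in\wh\PP^\pm$, $\fb\in\wh\LL$ (rather than merely $\ol\PP^\pm$, $\ol\LL$) really enter — is in verifying that extracting the $q$-linear component of each defining relation picks up contributions exactly from the corresponding linear parts, so that Lemmas~\ref{lem:codiff} and \ref{lem:char_morphism} apply verbatim to $\hb^\pm_1$ and $\fb_1$.
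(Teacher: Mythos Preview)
Your proof is correct. The paper itself provides no proof for this lemma --- it ends with a bare $\qed$, treating the result as routine bookkeeping --- so there is no argument to compare against; your write-up is precisely the kind of verification the paper's $\qed$ is gesturing at. The strategy you use (extract the $q$-linear component of each structural identity, use the bidegree count to see that under the hypotheses $\hb^\pm\in\wh\PP^\pm$, $\fb\in\wh\LL$ only the linear parts contribute, then invoke Lemmas~\ref{lem:codiff} and \ref{lem:char_morphism}) is the natural one and matches the spirit of the surrounding discussion in \S\ref{ssec:lin}.
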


\subsection{Augmentations and Linearization}
\label{ssec:aug_lin}

As linearized structures are defined on a considerably smaller complex ($C$ as opposed to $SC$) and allow to access the full $biLie_\infty$ structure, situations where they can be achieved are of special interest. One particularly useful way to systematically construct them is via augmentations, which are morphisms from $(\AA^+,\hb^+)$ to the trivial structure $(\AA^-,\hb^-)=(\Lambda,0)$. To distinguish it from other spaces of mixed power series appearing in this subsection, we denote the one for this special case by $\LL_0$. It consists of pure power series in the variables $p^+$ (i.e. there are no $q$-variables), and so it can be viewed as a subalgebra of any other $\LL$ with the same $\AA^+$. An augmentation then determines and is determined by an element $\fb\in \ol\LL_0$ satisfying $\hb^+|_{L_\fb}=0$.

We will now consider the special case $(\AA^-,\hb^-)=(\AA^+,\hb^+)$, which was already mentioned in Example~\ref{ex:identity}. Fix $\fb\in \ol\LL_0\subseteq \ol\LL$ satisfying $\hb^+|_{L_\fb}=0$. We use it to construct the morphism
$$
\Psi_\fb:\ol S(\AA^+[2N]) \to \ol S(\AA^-[2N])\cong \ol S(\AA^+[2N])
$$
associated via Lemma~\ref{lem:morphism} to the element $\ib+\fb\in \ol\LL$, where $\ib=\sum_{\gamma\in \Gamma^+} \frac 1{\kappa_\gamma}q_\gamma^- p_\gamma^+$ is as in Example~\ref{ex:identity}. By construction, it intertwines the codifferential $D$ on the domain with the codifferential $\Psi_\fb \circ D \circ \Psi_\fb^{-1}$ on the target (here we drop the superscripts $\pm$ to lighten the notation a bit).

\begin{lem}
The codifferential
$$
\Psi_\fb \circ D \circ \Psi_\fb^{-1}:\ol S(\AA[2N]) \to \ol S(\AA[2N])
$$
is associated to an element $\hb_\fb\in \wh\PP$, and so it can be linearized.
\end{lem}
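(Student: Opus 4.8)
The plan is to identify the element $\hb_\fb\in\PP$ inducing the conjugated codifferential as an explicit substitution applied to $\hb^+$, and then to read off the two defining conditions of $\wh\PP$ directly from that formula. The only real difficulty is bookkeeping: one must keep apart the Lagrangian $L_{\ib+\fb}$ --- the graph of $d(\ib+\fb)$, whose effect on $\hb^+$ is a \emph{shift} of the $q$-variables --- and the augmentation Lagrangian $L_\fb$ --- the graph of $d\fb$ over $(C^+)^*$, whose effect is to \emph{replace} the $q$-variables; it is exactly the interplay of these two, via the augmentation equation $\hb^+|_{L_\fb}=0$, that forces the ``$q=0$''-part of $\hb_\fb$ to vanish.

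First I would compute $L_{\ib+\fb}$. As $\fb\in\ol\LL_0$ is a power series in the variables $p^+$ alone, one has $\del(\ib+\fb)/\del q^-_\gamma=\tfrac1{\kappa_\gamma}p^+_\gamma$ and $\del(\ib+\fb)/\del p^+_\gamma=\tfrac1{\kappa_\gamma}q^-_\gamma+\del\fb/\del p^+_\gamma$, so
\[
L_{\ib+\fb}=\Bigl\{\,p^-_\gamma=p^+_\gamma,\ \ q^+_\gamma=q^-_\gamma+\kappa_\gamma\,\del\fb/\del p^+_\gamma\,\Bigr\}.
\]
Thus restriction to $L_{\ib+\fb}$ sends $\hb^+$ to the substitution $q^+_\gamma\mapsto q^-_\gamma+\kappa_\gamma\,\del\fb/\del p^+_\gamma$, while on an element of $\PP$ it merely renames $p_\gamma$ to $p^+_\gamma$ (the coordinate $q^-$ being free on $L_{\ib+\fb}$) and is therefore bijective. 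Hence there is a unique $\hb_\fb\in\PP$ with $\hb_\fb|_{L_{\ib+\fb}}=\hb^+|_{L_{\ib+\fb}}$, namely
\[
\hb_\fb:=\hb^+\big|_{\,q_\gamma\,\mapsto\,q_\gamma+\kappa_\gamma\,\del\fb/\del p_\gamma\,},
\]
which is well defined degree by degree in $p$. Since $\fb$ contains no $q$-variables we have $\{\fb,p_\gamma\}=0$, so this substitution is precisely the time-$1$ flow of the Hamiltonian vector field $\{\fb,\cdot\}$, hence a Poisson automorphism; in particular $\hb_\fb$ is again homogeneous of degree $2N-1$ and satisfies $\{\hb_\fb,\hb_\fb\}=0$.

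Next I would check $\hb_\fb\in\ol\PP$, i.e.\ $\hb_\fb|_{p=0}=0$: setting $p=0$ turns the defining substitution into a shift of the $q$-variables by constants, and $\hb^+|_{p=0}=0$ because $\hb^+\in\ol\PP^+$. With $\hb_\fb\in\ol\PP$ in hand, Lemma~\ref{lem:char_morphism} applies and tells us that the coalgebra isomorphism $\Psi_\fb$ (associated to $\ib+\fb\in\ol\LL$) intertwines $D=D_{\hb^+}$ with $D_{\hb_\fb}$; hence $\Psi_\fb\circ D\circ\Psi_\fb^{-1}=D_{\hb_\fb}$.

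Finally I would check $\hb_\fb\in\ul\PP$, i.e.\ $\hb_\fb|_{q=0}=0$. Setting $q=0$ in the defining substitution leaves $\hb_\fb|_{q=0}=\hb^+\big|_{\,q_\gamma\mapsto\kappa_\gamma\,\del\fb/\del p_\gamma\,}$, which is exactly the restriction $\hb^+|_{L_\fb}$ of $\hb^+$ to the Lagrangian graph of the augmentation $\fb$; by hypothesis this vanishes. Therefore $\hb_\fb\in\ol\PP\cap\ul\PP=\wh\PP$, and the last assertion follows from the discussion in \S\ref{ssec:lin}. As noted above, the substantive work is not in any one step but in carrying the two Lagrangian restrictions through correctly and in verifying, via the Hamiltonian-flow picture, that $\hb_\fb$ retains the homogeneity and Poisson self-commutation needed both to invoke Lemma~\ref{lem:char_morphism} and to linearize.
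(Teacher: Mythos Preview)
Your argument is correct. Both you and the paper reduce the crucial $\ul\PP$-condition to the identity $\bigl(\hb|_{L_{\ib+\fb}}\bigr)\big|_{q=0}=\hb|_{L_\fb}$ together with the augmentation hypothesis $\hb|_{L_\fb}=0$, so the core computation is the same.

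The logical packaging differs somewhat. The paper works directly with the conjugated coderivation: it observes that $e^{\ib+\fb}\tcev{D^+}=e^{\ib+\fb}\odot\hb|_{L_{\ib+\fb}}$ and checks that the projection $\pi_\Lambda\circ\Psi_\fb\circ D$ vanishes, leaving the identification of $\hb_\fb$ (as $\vect{e^\fb}\hb$, equivalently $\hb|_{L_{\ib+\fb}}$) to a remark after the proof. You instead produce $\hb_\fb$ explicitly at the outset via the shift substitution $q_\gamma\mapsto q_\gamma+\kappa_\gamma\,\del\fb/\del p_\gamma$, verify $\hb_\fb\in\ol\PP$ and $\{\hb_\fb,\hb_\fb\}=0$ (via the Hamiltonian-flow interpretation), and then invoke Lemma~\ref{lem:char_morphism} to identify $D_{\hb_\fb}$ with the conjugate before checking $\hb_\fb\in\ul\PP$. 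Your route is slightly longer but more self-contained: it makes explicit why the conjugate is again of the form $D_h$ for some $h\in\ol\PP$ (a point the paper takes for granted), whereas the paper's route is terser and stays closer to the coderivation picture.
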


\begin{proof}
We need to show that the projection
$$
\pi_\Lambda \circ \Psi_\fb \circ D \circ \Psi_\fb^{-1}: \ol S(\AA[2N]) \to \Lambda[2N]
$$
onto $\Lambda[2N] \subset \AA[2N] \cong S^1\AA[2N]$ vanishes. 
We have already seen the discussion leading up to Lemma~\ref{lem:char_morphism} that
$$
e^{\ib+\fb}\,\tcev{D^+}=e^{\ib+\fb} \odot \hb|_{L_{\ib+\fb}}.
$$
In particular, the condition $\pi_\Lambda \circ \Psi_\fb \circ D=0$ is equivalent to the condition $\left(\hb|_{L_{\ib+\fb}}\right)|_{q^-=0}=0$. 
But since every term in $\ib$ contains a $q$-variable, we have
$$
\left(\hb|_{L_{\ib+\fb}}\right)|_{q=0}=\hb|_{L_\fb},
$$
and the later is zero by the assumption that $\fb\in \ol\LL_0$ defines a morphism from $(\AA,\hb)$ to $(\Lambda,0)$.
\end{proof}
\bigskip

\begin{rem}
The element $\hb_\fb\in \wh\PP$ in the statement of this lemma admits the following explicit description: Consider the action of $S\LL_0$ on $\PP$ by setting
$$
\vect{c_1 \odot \dots \odot c_r}(g):=\{c_1,\{\dots,\{c_r,g\}\dots\}.
$$
Note that, because the $c_i$ Poisson commute, this is well-defined by an argument analogous to the proofs of Lemmata~\ref{lem:codiff} and \ref{lem:symmetric}.

A straightforward unwinding of the definitions now yields that
$$
\hb_\fb = \vect{e^\fb}\,\hb.
$$
Alternatively, it can be described as $\hb|_{L_{\ib+\fb}}$.
Both expressions correspond to ``composing'' {\em some} of the outputs of $\hb$ with inputs of $\fb$.
\end{rem}

\subsubsection*{Augmenting Maurer-Cartan elements}

Now suppose that $\ab \in \AA^\comp$ is a Maurer-Cartan element for the structure associated to some $\hb\in \ol\PP$. For any $\fb\in \ol\LL_0$ satisfying $\hb|_{L_\fb}=0$, we can consider the morphism
$$
\Psi_\fb: \ol S(\AA^\comp[2N]) \to \ol S(\AA^\comp[2N])
$$
associated to the element $\ib+\fb \in \ol\LL$ as above.
We then get the pushforward Maurer-Cartan element $(\ib+\fb)_*\ab\in \AA^\comp$ for the augmented (and hence linearizable) structure associated to $\hb_\fb\in \wh\PP$. The part $\left((\ib+\fb)_*\ab\right)_1$ of this Maurer-Cartan element which is linear in the $q$-variables will define a Maurer-Cartan element for the linearized structure, i.e. the $L_\infty$-structure on $C[2N+1]$ defined using the $\{(\hb_\fb)^r_1\}_{r\ge 1}$ (as discussed in \S~\ref{ssec:lin} above).





\section{Relation to the SFT formalism from \cite{EGH00}}
\label{sec:rsft}

Here we very briefly review the relevant formalism from the foundational paper by Eliashberg, Givental and Hofer~\cite{EGH00}. We do assume that the reader is familiar with most of the content, and just highlight the relevant points for our discussion of {\em rational SFT} here.

Given a contact manifold $(V^{2n-1},\alpha)$ with all periodic orbits
non-degenerate, the formalism uses the structures introduced in section~\ref{sec:algebra} with the indexing set $\Gamma$ consisting of all (good) Reeb orbits $\gamma$. Here the $\Z$-grading exists under favourable circumstances, and comes from the Conley-Zehnder indices of the Reeb orbits via
$$
|q_\gamma|=n-3+CZ(\gamma). 
$$
The integer $N$ introduced above equals $n-3$ in this context, so that
$$
|p_\gamma|=n-3-CZ(\gamma),
$$
and $\kappa_\gamma$ is the geometric multiplicity of the Reeb orbit.

There are several options for the coefficient ring:
\begin{itemize}
\item In the simplest situations, one can choose $\Lambda=\Q$. This works whenever one is interested in invariants of single contact manifolds, or when all considered cobordisms are exact, and when no additional constraints beyond the asymptotics at the punctures are implemented.
\item If one wants to include the discussion of non-exact cobordisms, one can instead use $\Lambda=\Lambda_{\mathrm{Nov}}$, the universal Novikov ring
$$
\Lambda_{\mathrm{Nov}}:= \left\{ \left.\sum_i c_i\lambda^{a_i} \,\right|\, c_i\in \Q, \, a_i \ge 0, \, \lim_{i\to \infty}a_i=\infty\right\}.
$$
It is filtered by the minimal exponent of a term with non-zero coefficient.
\item In the most general setting, one can use for $\Lambda$ a graded polynomial ring in (potentially infinitely many) variables $t_i$, which correspond to possible point constraints (and the degree of $t_i$ corresponds to the codimension of the constraint) over the universal Novikov ring $\Lambda_{\mathrm{Nov}}$. This is not exactly what is done in \cite{EGH00}, where they use the Novikov completion of $\Z[H_2(V;\Z)]$, but our version just amounts to a convenient repackaging of the information, and allows us to treat all contact manifolds and cobordisms over the same coefficients (for the constraints, ``same'' has to be suitably interpreted).
\end{itemize}

\begin{rem}
We use {\em point constraint} as a generic term for any constraint on a map which is imposed at a single point of the domain. This encompasses all of the different constraints Siegel~\cite{Sie19} and Moreno and Zhou~\cite{MZ20} use, if one interprets Siegel's higher order self-contact constraints in terms of maps to blow ups of the target symplectic manifold as in \cite[\S~5.3]{Sie19}.
\end{rem}
To a contact manifold as above, {\em rational SFT} associates an element
$\hb \in \ol\PP$ of degree $2N-1$, where the coefficient of any monomial in the $q_\gamma$, $p_\gamma$ and $t_i$ is the count of rigid rational holomorphic curves of finite Hofer energy in $\R \times V$ with respect to a cylindrical almost complex structure $J$ and with positive asymptotics corresponding to the $p$-variables, negative asymptotics corresponding to the $q$-variables, and subject to constraints at (distinct) interior marked points corresponding to each of the $t_i$. This count will typically depend on the auxillary choices, such as the almost complex structure and the choice of virtual fundamental chain technique needed to deal with failure of regularity. 
The compactness, transversality and gluing results give a description of the boundaries of moduli spaces rational holomorphic curves of expected dimension 1. The algebraic consequence of this description for the generating function $\hb$ has a very concise form:
\begin{thm}\cite[Lemma~2.2.6]{EGH00}
The rational Hamiltonian $\hb\in \ol\PP$ satisfies
\begin{equation}\label{eq:rsft1}
\{\hb,\hb\} =0.
\end{equation}
\end{thm}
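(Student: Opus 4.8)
The plan is to derive equation \eqref{eq:rsft1} from the combinatorial structure of the boundary of the one-dimensional moduli spaces of rational holomorphic curves in $\R\times V$, following the standard SFT argument. First I would fix a generic cylindrical almost complex structure $J$ (together with the chosen virtual fundamental chain technique) and consider, for a fixed collection of positive asymptotics, negative asymptotics, and point constraints that make the expected dimension equal to $1$, the compactified moduli space $\ol\MM$ of such curves. Gromov--Hofer compactness says $\ol\MM$ is a compact one-dimensional space whose boundary consists of two-level SFT buildings: a curve in $\R\times V$ on top and a curve in $\R\times V$ on the bottom (there is no symplectization-vs-filling distinction here since the whole target is a symplectization, and all breaking is along Reeb orbits), each of expected dimension $0$, glued along a common collection of orbit asymptotics. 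Transversality and gluing then identify the boundary points of $\ol\MM$ bijectively with such broken configurations.

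The next step is to translate this geometric statement into algebra. By definition, $\hb=\sum \hb^r_s$ collects the signed, weighted counts of rigid curves with $r$ positive and $s$ negative punctures (and prescribed constraints), so a product $\hb\cdot\hb$ in $\PP$, after accounting for the multiplicities $\kappa_\gamma$ and combinatorial factors from choosing which output orbits of the bottom curve match which input orbits of the top curve, is exactly a weighted count of such two-level buildings. The Poisson bracket $\{\hb,\hb\}$ isolates precisely the configurations in which exactly one negative puncture of the top-level curve is glued to one positive puncture of the bottom-level curve along a single Reeb orbit $\gamma$, with the factor $\kappa_\gamma\,\delta_{\gamma\gamma'}$ in $\{p_\gamma,q_{\gamma'}\}$ bookkeeping the $\kappa_\gamma$ ways of gluing along a $\kappa_\gamma$-fold cover and the requirement that the orbits agree. (The fact that a single bracket, not a product, is the right operation reflects that a generic one-dimensional moduli space degenerates along exactly one breaking orbit, since breaking along $k$ orbits generically has codimension $k$.) Hence $\{\hb,\hb\}$ is, up to sign, the count of all boundary points of the one-dimensional moduli spaces $\ol\MM$, assembled into a generating function in $\PP$.

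Finally, since each $\ol\MM$ is a compact one-manifold, its signed boundary count is zero; doing this for all choices of asymptotics and constraints simultaneously gives $\{\hb,\hb\}=0$. I would also remark on two bookkeeping points needed to make this honest: that $\{\hb,\hb\}$ lands in $\ol\PP$ (it does, because both factors do, and a bracket of elements of $\ol\PP$ has no $p$-independent part — this is the reason the identity is stated in $\ol\PP$), and that $|\{\hb,\hb\}| = 2(2N-1)-2N = 2N-2$, consistent with it being (if nonzero) the count of zero-dimensional strata bounding one-dimensional ones.

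The main obstacle is not algebraic but analytic: making rigorous the statement that the compactified one-dimensional moduli spaces are honestly compact one-manifolds with boundary whose boundary is exactly the transversely-cut-out two-level buildings counted by $\{\hb,\hb\}$. This requires the full package of SFT compactness, transversality (via a suitable virtual perturbation scheme), and gluing, together with a careful check that the signs assigned by the coherent orientations are compatible with the signs in the Poisson bracket and the $1/k!$-type symmetry factors implicit in working with $SC$. As the excerpt itself notes, these analytic foundations are not fully in the literature; accordingly I would present this as a sketch, cite \cite[Lemma~2.2.6]{EGH00} for the precise statement, and emphasize that only the algebraic shape of the conclusion — the master equation $\{\hb,\hb\}=0$ in $\ol\PP$ — is what the rest of the paper uses.
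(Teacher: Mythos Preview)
The paper does not actually give its own proof of this theorem: it is quoted from \cite{EGH00} as a known input, with only the one-sentence lead-in that ``the compactness, transversality and gluing results give a description of the boundaries of moduli spaces of rational holomorphic curves of expected dimension 1.'' Your sketch is the standard argument behind that sentence and is essentially correct, so there is nothing to compare against.

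One small point worth sharpening: your explanation for why a \emph{single} Poisson bracket (rather than a higher-order contraction) is the right operation appeals to ``breaking along $k$ orbits generically has codimension $k$.'' That is not quite the relevant mechanism here. The real reason is the genus constraint: $\hb$ counts \emph{connected} genus-zero curves, and a codimension-one degeneration of a connected genus-zero curve into a two-level building necessarily produces two connected genus-zero pieces joined along exactly one Reeb orbit (more breaking orbits would raise the genus of the glued curve). In full SFT one does see multiple gluings in codimension one, which is why the Weyl-algebra commutator replaces the Poisson bracket there. Also, your aside that ``a product $\hb\cdot\hb$\dots is exactly a weighted count of such two-level buildings'' is misleading as written --- the product counts disconnected pairs, and it is precisely the bracket that extracts the glued configurations --- though you correct course immediately afterwards.
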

\begin{rem}
For later use, we point out that equation \eqref{eq:rsft1} can be expanded in the constraints, so that each monomial in the $t_i$ yields a separate equation. Temporarily denote by $\hb'$ the part of $\hb$ independent of $t$ (i.e. without point constraints), and by $\hb(t_1)$ the part of $\hb$ {\em linear} in a single constraint $t_1$. Then we have
$$
\{\hb',\hb'\} = 0 \quad \text{and} \quad
2\{\hb',\hb(t_1)\}= \{\hb(t_1),\hb'\} + \{\hb',\hb(t_1)\} =0.
$$
This will be used in \S~\ref{sec:moreno_zhou} below.
\end{rem}

Next suppose $W$ is a compact symplectic cobordism with positive end $(V^+,\alpha^+)$ and negative end $(V^-,\alpha^-)$. Associated to the two ends we have $\hb^\pm\in \ol{\PP^\pm}$. The counts of rigid rational holomorphic curves associated to choices of almost-complex structure and perturbation data suitably consistent with the choices at the ends can be packaged into an element $\fb \in \LL$ of degree $2N$. In this situation, the analysis of compactifications of moduli spaces of rational holomorphic curves in the completion of $W$ of expected dimension 1 yields
\begin{thm}\cite[Theorem~2.3.6]{EGH00}
The rational potential $\fb\in \LL^\comp$ of a cobordism $W$ and the rational Hamiltonians $\hb^\pm$ of the symplectizations of the two ends are related by
\begin{equation}\label{eq:rsft2}
\hb^-|_{L_\fb} = \hb^+|_{L_\fb}.
\end{equation}
\end{thm}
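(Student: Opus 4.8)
This identity is a structural consequence of the compactness and gluing theory for rational holomorphic curves, and the plan is the standard SFT argument: I would exhibit each side of \eqref{eq:rsft2} as the signed count of the codimension-one boundary of a compactified one-dimensional moduli space of rational curves in the completed cobordism $\wh W$, so that \eqref{eq:rsft2} reduces to the vanishing of the total oriented boundary of a compact $1$-manifold.

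Concretely, I would fix a monomial in the $p^+_\gamma$ (prescribing the positive asymptotics), a monomial in the $q^-_\gamma$ (prescribing the negative asymptotics) and a monomial in the $t_i$ (prescribing the point constraints), and let $\ol\MM$ denote the corresponding compactified moduli space of connected rational holomorphic curves in $\wh W$ of expected dimension $1$, arranged -- after the requisite virtual perturbation -- to be a compact oriented $1$-manifold with boundary. By SFT compactness, $\partial\ol\MM$ consists of two-level holomorphic buildings (interior nodal degenerations of the genus-zero domain are of codimension two), and there are exactly two combinatorial types: those whose top level is a single connected curve of index $1$ in the symplectization $\R\times V^+$ -- rigid modulo the $\R$-translation, any further top-level components being trivial cylinders -- with each of its negative punctures capped off by a connected index-$0$ curve in $\wh W$; and the mirror type whose bottom level is a single connected index-$1$ curve in $\R\times V^-$, with each of its positive punctures capped off by a connected index-$0$ curve in $\wh W$.

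The second step is to match these two families of boundary strata with the two sides of \eqref{eq:rsft2}. A stratum of the first type is exactly what one obtains from a monomial of $\hb^+$ by replacing each variable $q^+_\gamma$ -- a negative puncture of the top curve -- by $\kappa_\gamma\,\del\fb/\del p^+_\gamma$, i.e. a choice of capping curve counted by $\fb$ and sharing that asymptotic orbit, the weight $\kappa_\gamma$ being precisely the gluing multiplicity at a Reeb orbit of multiplicity $\kappa_\gamma$. By the defining equations of $L_\fb$, summing over all such configurations reproduces the coefficient of the chosen monomial in $\hb^+|_{L_\fb}$; symmetrically, the strata of the second type assemble into $\hb^-|_{L_\fb}$. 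With a coherent system of orientations for which the two breaking types carry opposite signs, the fact that $\ol\MM$ is a compact oriented $1$-manifold forces the coefficient of the chosen monomial in $\hb^+|_{L_\fb}-\hb^-|_{L_\fb}$ to vanish; assembling over all monomials -- which is what forces the passage to the completion $\LL^\comp$, since infinitely many homotopy classes may contribute to a fixed asymptotic and constraint pattern -- yields \eqref{eq:rsft2}. Equivalently, by the discussion leading to Lemma~\ref{lem:char_morphism}, the identity says $\vect{D^-}e^\fb=e^\fb\,\tcev{D^+}$, i.e. that the coalgebra morphism associated to $\fb$ is a chain map; the tree/unshuffle combinatorics of the coderivations $\vect{D^-}$ and $\tcev{D^+}$ are then exactly the combinatorics of how a connected two-level building decomposes into its connected pieces, so the algebra of \S\ref{ssec:mor} is the natural bookkeeping device for this geometric count.

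The hard part is not the algebra but the analysis: the argument presupposes SFT-style compactness, a transversality or virtual-fundamental-chain package making $\ol\MM$ a genuine compact $1$-dimensional object carrying exactly the expected boundary strata, and a coherent system of orientations realizing the sign cancellation -- precisely the foundational ingredients that, as noted in the introduction, have not been completely published, which is why we quote the theorem from \cite{EGH00} rather than reprove it here.
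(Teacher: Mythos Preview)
The paper does not prove this theorem at all: it is quoted directly from \cite[Theorem~2.3.6]{EGH00}, with only the single sentence preceding the statement (``the analysis of compactifications of moduli spaces of rational holomorphic curves in the completion of $W$ of expected dimension 1 yields'') serving as an indication of where it comes from. Your sketch unpacks exactly that sentence --- the standard SFT argument via the boundary of a compactified $1$-dimensional moduli space, with the two breaking types matched to the two restrictions $\hb^\pm|_{L_\fb}$ --- and your closing paragraph correctly acknowledges both that the analytical foundations are the substantive content and that this is why the result is cited rather than reproved; so your proposal is entirely consistent with, and more detailed than, what the paper does.
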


\begin{rem}
\begin{itemize}
\item When the cobordism $W$ is exact, one does not need to complete and we have $\fb\in \ol\LL$ (since for energy reasons every holomorphic curve in $W$ must have at least one positive end). In particular, $\fb|_{t=0}$ (the part of the potential without constraints) will give rise to a morphism
$$
\Phi:(\ol S(\AA^+[2N]), D^+) \to (\ol S(\AA^-[2N],D^-)
$$
as in \S~\ref{ssec:mor} above.
\item When the cobordism $W$ is not exact, we generally expect $\fb\notin \ol\LL$, and so one proceeds as in \S~\ref{ssec:gen_mor} by splitting $\fb|_{t=0}=\fb^0+\fb'$ and interpreting $\ab=\fb^0\in \AA^{-,\comp}$ as a Maurer-Cartan element for the structure at the negative end, and using $\fb'$ to construct the corresponding morphism
$$
\Phi:(\ol S(\AA^{+,\comp}[2N]), D^+) \to (\ol S(\AA^{-,\comp}[2N],D^{-,\ab}).
$$

\end{itemize}
\end{rem}

We stress again that \eqref{eq:rsft2} holds with constraints present, provided that the constraints in $V^\pm$ and $W$ labelled by the same $t$-variable are suitably compatible (cf.~\cite[\S~2.3.1]{EGH00} for some more details on this). Most relevant to us are two particular situations:
\begin{itemize}
\item In \S~\ref{sec:siegel}, we will use one or several point constraints in the interior of a compact cobordism $W$ with empty negative end. Algebraically, this has the effect that the left hand side of \eqref{eq:rsft2} vanishes, and that the $t$-variables corresponding to the constraints only appear in $\fb$, but not in $\hb^+$. Expanding \eqref{eq:rsft2} in the constraints yields one equation of the form
\begin{equation}\label{eq:mor_with_constraints}
\left(e^{\fb}(T)\right)\tcev{D^+}=0
\end{equation}
for each monomial $T$ in the constraints $t_i$, where $e^\fb(T)$ denotes the part of $e^\fb$ corresponding to the fixed monomial $T$. Temporarily denote by $\fb'=\fb|_{t=0}$ the part of $\fb$ corresponding to curves without constraints, and by $\fb(T)$ the part of $\fb$ corresponding to a given monomial $T$. With these conventions, we have for example
\begin{align*}
e^\fb(t_1)&=\fb(t_1) \odot e^{\fb'},\\
e^\fb(t_1t_2)&=\left(\fb(t_1t_2)+\fb(t_1)\odot \fb(t_2)\right)\odot e^{\fb'} \qquad \text{and}\\
e^\fb(t_1^2)&=\left(\fb(t_1^2)+\frac 12 \fb(t_1)\odot \fb(t_1)\right) \odot e^{\fb'},
\end{align*}
from which the general pattern should be clear. For each $T$, $e^\fb(T)$ is the product of $e^{\fb'}$ with a suitable polynomial in the components of $\fb$ with constraints which reflects the possible partitions of $T$ and potential symmetries of the resulting configurations.
\item In \S~\ref{sec:moreno_zhou} below we will use the case that we have a single $t$-variable corresponding to a point constraint on $\R\times \{x^\pm\} \subseteq \R \times V^\pm$ on the ends, joined by a path of constraints connecting $x^+$ to $x^-$ in the compact part of the cobordism. With the conventions of \cite{EGH00}, this would be modelled by a Poincare dual $(2n-1)$-form, and the degree of the corresponding variable $t_1$ would be $2n-3=2N+3$. The part of \eqref{eq:rsft2} which is linear in this variable $t_1$ then has the form
\begin{equation}
\hb^-(t_1)|_{L_{\fb'}} - \hb^+(t_1)|_{L_{\fb'}} = \{\fb(t_1),(\hb^+)'\}|_{L_{\fb'}} - \{(\hb^-)',\fb(t_1)\}|_{L_{\fb'}},
\end{equation}
where we have used the same conventions as above, i.e. primed expressions correspond to $T=0$. Up to a change of notation, this easily translates to equation \eqref{eq:g-homotopy} appearing in \S~\ref{sec:moreno_zhou} below.
\end{itemize}

\section{The morphisms used by Siegel}
\label{sec:siegel}

With the language developed above, it is now fairly straightforward to describe the algebraic nature of the morphisms that Siegel uses to define his capacities.

Siegel works in the setting of a contact manifold $(V^+,\alpha^+)$ with a symplectic filling $(W,\omega)$. As already mentioned in our summary of rational SFT, he considers holomorphic curves with (various types of) constraints at interior points of $W$, so formally we are in the setting of \eqref{eq:mor_with_constraints}. One possible interpretation of this equation is that for each monomial $T$ in the constraints we have a map
$$
\Phi(T): \ol S(\AA^+[2N]) \to \ol S \Lambda, \qquad\Phi(T)(w_1\odot \dots \odot w_r)=\left(\left(e^\fb(T)\right)\tcev{D}_{w_1\odot \dots \odot w_r}\right)|_{p=0}
$$
whose degree depends on the degree of $T$, but which vanishes on the image of $D^+$. Note that $\Phi(T)$ is {\em not} a morphism of coalgebras unless $T=0$, so for $T\ne 0$ these maps are {\em not} augmentations. Nonetheless, as they vanish on $\im D^+ \subseteq \ol S(\AA^+[2N])$, they do descend to well-defined maps on $H_*(\ol S(\AA^+[2N],D^+)$.

In the restricted setting of studying a contact manifold $(V^+,\alpha^+)$ as the boundary of a given compact symplectic domain $(W,\om)$, the contact form $\alpha^+$ will be well-defined up to addition of an exact 1-form $df$. This has the effect of giving the whole theory a well-defined filtration by ``action'', which is essential in Siegel's work. Many of his higher capacities are obtained by looking at specific monomials $T$ in the constraints and specific elements $b\in S\Lambda$, and looking for the infimum of exponents $a$ such that $\lambda^a \cdot b$ is in the image of $\Phi(T)$.

\begin{rem}
The well-definedness of these capacity requires enough of the machinery needed to prove invariance of (rational) SFT to be able to conclude that these numbers are indeed independent of any auxillary choices. 
\end{rem}

An important property of a symplectic capacity is its monotonicity with respect to symplectic embeddings, meaning that whenever there is a symplectic embedding $(W^-,\omega^-) \subseteq (W^+,\omega^+)$, the inequality $c(W^-,\omega^-) \le c(W^+,\omega^+)$ should hold. In the situation of interest to us, $(W^\pm,\omega^\pm)$ are compact Liouville domains with convex boundaries $V^\pm$, meaning that the Liouville forms $\lambda^\pm$ on $W^\pm$ restrict to (positive) contact forms $\alpha^\pm$ on $V^\pm$. By shrinking $W^-$ slightly if necessary, we can arrange that $W^- \subseteq \operatorname{int}(W^+)$, so that we get a compact symplectic cobordism $W=W^+ \setminus W^-$ with positive end $(V^+,\alpha^+)$ and negative end $(V^-,\alpha^-)$. We assume that the constraints that are used to define our capacity $c$ are localized at points in $W^-\subseteq W^+$. By a neck stretching argument, we may decompose the relevant map $\Phi^+(T): \ol S(\AA^+[2N]) \to \ol S\Lambda[2N]$ as the precomposition of $\Phi^-(T): \ol S(\AA^-[2N]) \to \ol S\Lambda[2N]$ with the cobordism map.

Now there are two cases to consider, depending on whether or not the cobordism $W$ is exact (equivalently, whether or not the embedding $W^- \hookrightarrow W^+$ is exact). In the exact case, the potential $\fb$ of the cobordism $W$ is an element of $\ol\LL$, and so by Lemma~\ref{lem:morphism} it straightforwardly translates into a morphism  $\Phi: \ol S(\AA^+[2N]) \to \ol S(\AA^-[2N])$. We will then have $\Phi^+(T)=\Phi^-(T)\circ \Phi$, and so if $w \in \ol S(\AA^+[2N])$ gets mapped to $\lambda^a \cdot b \in \ol S\Lambda[2N]$ by $\Phi^+(T)$, then $\Phi(w)$ is mapped to the same element under $\Phi^-(T)$. This clearly proves monotonicity in this case.

When the cobordism $W$ is not exact, we split the potential $\fb$ into
$\fb=\fb^0+\fb'$, where $\fb^0$ is the part which is independent of the $p^+$-variables, and $\fb' \in \ol\LL$. As described in \S~\ref{ssec:gen_mor}, $\fb^0\in \AA^{-,\comp}$ will be a Maurer-Cartan element, and $\fb'$ will define a morphism $\Phi':\ol S(\AA^{+,\comp}[2N] \to \ol S(\AA^{-,\comp})$ which intertwines the codifferential $D^+$ induced from $\hb^+$ with the codifferential $D^{-,\fb^0}$ induced from the twisted Hamiltonian $\hb^{-,\fb^0}\in \ol\PP^{-,\comp}$. We then have
$$
\Phi^+(T) = \Phi^-(T) \circ \Psi^{\fb^0} \circ \Phi',
$$
where $\Psi^{\fb^0}:\ol S(\AA^{-,\comp}[2N] \to \ol S(\AA^{-,\comp}[2N])$ is the multiplication with $e^{\fb^0}$ already considered in \S~\ref{ssec:mc}. In particular, if $w\in \ol S(\AA^+[2N])$ gets mapped to $\lambda^a \cdot b \in \ol S\Lambda[2N]$ by $\Phi^+(T)$, then $e^{\fb^0} \odot \Phi'(w)$ is mapped to the same element under $\Phi^-(T)$.  As $e^{\fb^0}$ only makes sense in the completed setting, this proves monotonicity for general symplectic embeddings under the assumption that one uses completed complexes in the definition of the capacities.

In the special case that one considers a single point constraint $T=t_1$, there is also a linearized version, which looks at the corresponding maps
$$
\phi(t_1): \ol S C[2N] \to S^1\Lambda=\Lambda
$$
on the linearized theory, whose defining components
$$
\phi^r(t_1):S^rC[2N] \to \Lambda
$$
for $r\ge 1$ is given (in the notation of the previous section) by the part $\fb^r(t_1)$ of $\fb(t_1)$ which is of degree $r$ in $p^+$. The invariance argument then requires the linearized structures mentioned in Lemma~\ref{lem:linearized}.

\section{The invariants used by Moreno and Zhou}
\label{sec:moreno_zhou}

In this section, we comment on the purely algebraic aspects of the invariants of Moreno and Zhou. Specifically, we explain how the torsion and the order can be described in our version of the formalism. Throughout this section, we specialize to the ground ring $\Lambda=\Q$.

Let $\AA$ and $\PP$ be as before, and let $\hb\in \ol\PP$ be an element of degree $2N-1$ with
$$
\{\hb,\hb\}=0.
$$
As observed at the end of \S~\ref{ssec:structures}, the associated codifferential $D:\ol{S}(\AA[2N]) \to \ol{S}(\AA[2N])$ leaves invariant the submodules $S^{\le k}(\AA[2N]):= \oplus_{r=1}^k S^r(\AA[2N])$. This leads to the following definition (cf. \cite[Def.~2.12]{MZ20}).

\begin{defn}\label{def:torsion}
Let $\AA$ and $\hb\in \ol\PP$ be as above and let $D=D_\hb:\ol{S}(\AA[2N]) \to \ol{S}(\AA[2N])$ be the corresponding codifferential of degree $-1$. The {\em torsion} $T(\AA,\hb)\in \N_0 \cup \{\infty\}$ of $(\AA,\hb)$ is defined as
$$
T(\AA,\hb) := \min \left\{k-1 \,:\, \id\in \im\left(D|_{S^{\le k}(\AA[2N])}\right)\right\},
$$
where as before $\id\in \Q \subseteq \AA[2N] \cong S^1(\AA[2N])$ denotes the unit and we use the usual convention that the minimum of the empty set is $\infty$.
\end{defn}
\begin{rems}\
\begin{itemize}
\item Note that $D(w_1 \odot \dots \odot w_k)$ can only have a summand in $\Q \subseteq \AA[2N] \cong S^1\AA[2N]$ if we have $w_i=q_{\gamma_i}$ for all $i$ and suitable (not necessarily distinct) labels $\gamma_i\in \Gamma$.
\item $T(\AA,\hb)=0$ means that $1\in \im(D^1)$. As $D^1:\AA \to \AA$ is a derivation, this is easily seen to be equivalent to $H_*(\AA,D^1)=0$.
\end{itemize}
\end{rems}

Suppose $(\AA^\pm,\hb^\pm)$ are as in Definition~\ref{def:torsion}, so that $T(\AA^\pm,\hb^\pm)\in \N_0\cup\{\infty\}$ are defined. If $\fb\in \ol\LL$ of degree $2N$ satisfies
$$
\hb^+|_{L_\fb} = \hb^-|_{L_\fb},
$$
so that the morphism $\Phi$ from Lemma~\ref{lem:morphism} is defined, then (as originally observed in \cite{MZ20})
$$
T(\AA^+,\hb^+) \ge T(\AA^-,\hb^-).
$$
Indeed, by construction the morphism $\Phi$ maps $\ol{S}^{\le k}(\AA^+[2N])$ into $\ol{S}^{\le k}(\AA^-[2N])$, and so any element $a\in \ol{S}^{\le k}(\AA^+[2N])$ satisfying $D^+(a)=\id$ will give rise to an element $\Phi(a) \in \ol{S}^{\le k}(\AA^-[2N])$ satisfying
$$
D^-(\Phi(a))=\Phi(D^+(a))=\Phi(\id)=\id.
$$

\begin{rem}
Note that the trivial structure with $\AA=\Q$ and $\hb=0$ clearly has $T(\Q,0)=\infty$, so finiteness of torsion obstructs the existence of an augmentation.
\end{rem}

\begin{rem}
One can also consider an alternative invariant defined as
$$
\wt{T}(\AA,\hb):= \min \left\{ k-1 \,:\, \id \in \im \left(\pi \circ D|_{S^{\le k}(\AA[2N])}\right)\,\right\},
$$
where $\pi$ denotes the projection of $\ol{S}(\AA[2N])$ to $\Q \subseteq S^1(\AA[2N]) \cong \AA[2N]$. Clearly one has
$$
\wt{T}(\AA,\hb) \le T(\AA,\hb)
$$
for all $(\AA,\hb)$ as above. While it might be easier to compute, the overall usefulness of this invariant is somewhat unclear, as there seems to be no obvious analogue of monotonicity under morphisms.
\end{rem}

Now, in addition to the above, assume the stricter condition that $\hb\in \wh{\PP}$. This automatically gives $T(\AA,\hb)=\infty$, and we have seen that one way to achieve it is to twist with respect to an augmentation.
Suppose further that we are given another homogeneous element $\gb\in \ol\PP$ of degree $d$ such that
\begin{equation}\label{eq:g}
\{\hb,\gb\}=0.
\end{equation}
Associated to $\hb$ and $\gb$ we have the coderivations $D_\hb$ and $D_\gb$ defined via iterated brackets as in \S~\ref{ssec:structures}. 
The new condition $\hb|_{q=0}=0$ translates into $\pi \circ D_\hb=0$, where $\pi$ is the same projection to $\Lambda[2N]\cong \Q$ as above, so that by Lemma~\ref{lem:codiff} 
$$
\pi \circ D_\gb \circ D_\hb = (-1)^{|\gb|} \pi \circ D_\hb \circ D_\gb = 0.
$$
It follows that $\pi \circ D_\gb$ descends to a morphism
$$
\pi \circ D_\gb: H_*(S^{\le k}(\AA[2N]),D_\hb) \to \Lambda
$$
of degree $d-4N$ for every $k \ge 1$. Again following Moreno and Zhou (cf.~\cite[Def.~2.17]{MZ20}), we make
\begin{defn}\label{def:order}
In this situation, we define the order of $(\AA, \hb, \gb)$ as
$$
O(\AA,\hb,\gb):= \min\,\{k \,:\, \id \in \im \pi\circ D_\gb|_{H_*(S^{\le k}(\AA[2N]),D_\hb)}\,\},
$$
where again the minimum over the empty set is set to be $\infty$.
\end{defn}

Now suppose we are given $\hb^\pm\in \wh{\PP}^\pm$ of degree $2N-1$ and $\gb^\pm\in \ol\PP^\pm$ of the same degree $d$ satisfying
$$
\{\hb^\pm,\hb^\pm\}=0 \qquad \text{and} \qquad \{\hb^\pm,\gb^\pm\}=0,
$$ 
so that $D^\pm=D_{\hb^\pm}$ and $T(\AA^\pm,\hb^\pm,\gb^\pm)$ are defined. 
\begin{lem}
In this situation, suppose that $\fb\in \wh \LL$ of degree $2N$ satisfies
$$
\hb^-|_{L_\fb}=\hb^+|_{L_\fb},
$$
and suppose there exists an element $\gb\in \LL$ of degree $d$ satisfying
\begin{equation}\label{eq:g-homotopy}
\vect{D_{\gb^-}}e^\fb - e^\fb\tcev{D_{\gb^+}} = \left(e^\fb \odot \gb\right)\tcev{D^+} - \vect{D^-}\left(\gb\odot e^\fb\right).
\end{equation}
Then
$$
O(\AA^+,\hb^+,\gb^+) \ge O(\AA^-,\hb^-,\gb^-).
$$
\end{lem}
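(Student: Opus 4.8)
The plan is to transport a chain witnessing the order at the positive end to one at the negative end along the coalgebra morphism $\Phi$ attached to $\fb$, using \eqref{eq:g-homotopy} to control how $\Phi$ interacts with the operations $D_{\gb^\pm}$. If $O(\AA^+,\hb^+,\gb^+)=\infty$ there is nothing to prove, so assume it is finite, equal to $k$; then there is a cycle $w\in S^{\le k}(\AA^+[2N])$, $D^+w=0$, with $\pi(D_{\gb^+}w)=\id$, where $\pi$ denotes the projection onto $\Lambda[2N]\subseteq S^1(\AA^\pm[2N])$. Since $\fb\in\wh\LL\subseteq\ol\LL$ and $\hb^-|_{L_\fb}=\hb^+|_{L_\fb}$, Lemma~\ref{lem:morphism} gives a coalgebra morphism $\Phi\colon\ol S(\AA^+[2N])\to\ol S(\AA^-[2N])$ with $\Phi(\id)=\id$, which intertwines $D^\pm$ by Lemma~\ref{lem:char_morphism} and which maps $S^{\le k}$ into $S^{\le k}$ (as in the argument for monotonicity of the torsion). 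Hence $\Phi(w)\in S^{\le k}(\AA^-[2N])$ with $D^-\Phi(w)=\Phi(D^+w)=0$, so $[\Phi(w)]\in H_*(S^{\le k}(\AA^-[2N]),D^-)$, and it remains to prove $\pi(D_{\gb^-}\Phi(w))=\id$.

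The main step is to turn \eqref{eq:g-homotopy} into an identity of operators. For $a\in S\LL$ let $\widehat a$ be the map $w_1\odot\dots\odot w_r\mapsto\bigl(a\,\tcev{D}_{w_1\odot\dots\odot w_r}\bigr)|_{p^+=0}$, so $\widehat{e^\fb}=\Phi$ by Lemma~\ref{lem:morphism}. The computation proving Lemma~\ref{lem:comp} applies verbatim to give, for any $b\in S\LL$ and any homogeneous $g^\pm\in\PP^\pm$, the identities $\widehat{\vect{D_{g^-}}b}=D_{g^-}\circ\widehat b$ and $\widehat{b\,\tcev{D_{g^+}}}=\widehat b\circ D_{g^+}$. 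Applying these with $b=e^\fb$, $g^\pm=\gb^\pm$ and with $b=\gb\odot e^\fb=e^\fb\odot\gb$ (the homogeneous components of $e^\fb$ having even degree), $g^\pm=\hb^\pm$, and writing $G:=\widehat{\gb\odot e^\fb}$, the image under $\widehat{\,\cdot\,}$ of the four terms of \eqref{eq:g-homotopy} is $D_{\gb^-}\circ\Phi$, $\Phi\circ D_{\gb^+}$, $G\circ D^+$, $D^-\circ G$, so that
$$
D_{\gb^-}\circ\Phi-\Phi\circ D_{\gb^+}=G\circ D^+-D^-\circ G .
$$
Composing with $\pi$ and using $\pi\circ D^-=0$ (valid because $\hb^-\in\wh\PP^-$) removes the term $D^-\circ G$, and evaluating on the cycle $w$ removes $G\circ D^+$; hence $\pi(D_{\gb^-}\Phi(w))=\pi(\Phi(D_{\gb^+}w))$.

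To finish, I would check that $\pi\circ\Phi=\pi$ as maps $\ol S(\AA^+[2N])\to\Lambda[2N]$. Only the arity-one component of $\Phi=e^\phi$ can land in the target of $\pi$; and by \eqref{eq:mor_explicit} the component $\phi^r$ evaluated on words $w_1,\dots,w_r$ of total word length $\tau$ involves $\tau-r+1$ factors of $\fb$, each of positive $q^-$-word-length because $\fb\in\wh\LL$. Hence for $r\ge2$ the output has positive $q^-$-word-length and so vanishes under $\pi$, unless $\tau=r-1$; but then the contributing configurations involve only iterated brackets among the words $w_i$ (which lie in $C^+$), and these vanish since distinct $q^+$ pairwise Poisson-commute. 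The same count shows $\phi^1$ fixes the unit and sends every nonconstant word to an element of positive $q^-$-word-length, i.e.\ $\pi\circ\phi^1=\pi$ on $S^1$. Therefore $\pi(\Phi(D_{\gb^+}w))=\pi(D_{\gb^+}w)=\id$, whence $\pi(D_{\gb^-}\Phi(w))=\id$; since $\pi\circ D_{\gb^-}$ is well defined on $H_*(S^{\le k}(\AA^-[2N]),D^-)$, the class $[\Phi(w)]$ witnesses $O(\AA^-,\hb^-,\gb^-)\le k=O(\AA^+,\hb^+,\gb^+)$.

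I expect the main obstacle to be the second paragraph: one has to make precise the completions implicit in $e^\fb$, in $G$, and in the maps $\widehat a$, and to carry the Koszul signs correctly through the (unstated, routine) generalizations of Lemma~\ref{lem:comp} to $\gb^\pm$ and to $\gb\odot e^\fb$ — the way \eqref{eq:g-homotopy} is written should be arranged so the signs work out, but this still has to be verified. The step $\pi\circ\Phi=\pi$, which is exactly the place where the hypothesis $\fb\in\wh\LL$ (rather than merely $\fb\in\ol\LL$) is used, is then only the short combinatorial check sketched above.
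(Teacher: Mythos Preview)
Your proof is correct and follows essentially the same route as the paper's: both push the witnessing cycle through $\Phi$, invoke the generalized Lemma~\ref{lem:comp} to move the $D_{\gb^\pm}$- and $D^\pm$-operators past $e^\fb$ (you package this as the operator identity $D_{\gb^-}\circ\Phi-\Phi\circ D_{\gb^+}=G\circ D^+-D^-\circ G$, the paper unrolls it as a chain of equalities on the element), then kill the two $G$-terms using $\pi\circ D^-=0$ and $D^+w=0$, and finish with the observation that $\fb\in\wh\LL$ forces $\pi\circ\Phi=\pi$. Your case analysis for $\pi\circ\Phi=\pi$ is more elaborate than necessary---the paper simply notes that since every term of $\fb$ carries a $q^-$, $\Phi$ sends nontrivial monomials to combinations of nontrivial monomials---but the substance is the same.
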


\begin{proof}
By assumption $\fb$ defines a morphism $\Phi:\ol{S}\AA^+[2N] \to \ol{S}\AA^-[2N]$ as in Lemma~\ref{lem:morphism}, which by Lemma~\ref{lem:char_morphism} intertwines the codifferentials $D^\pm$ induced from $\hb^\pm$.
%

Let $a\in \ol{S}^{\le k}(\AA^+[2N])$ be an element such that $D^+(a)=0$ (so it defines a homology class in $H_*(\ol{S}^{\le k}(\AA^+[2N],D^+)$) and $\pi \circ D_{\gb^+}(a)=\id$. Then
\begin{align*}
\pi \circ D_{\gb^-}(\Phi(a))
&= \pi \circ \vect{D_{\gb^-}} \left(e^\fb\tcev{D_a}\right)|_{p^+=0}\\
&=\pi \circ \left( \left( \vect{D_{\gb^-}} e^\fb\right)\tcev{D_a}\right)|_{p^+=0}\\
&=\pi \circ \left( \left( e^\fb\tcev{D_{\gb^+}} + \left(e^\fb \odot \gb\right)\tcev{D^+} - \vect{D^-}\left(\gb\odot e^\fb\right)\right)\tcev{D_a}\right)|_{p^+=0} \\
&=\pi \circ \left( e^\fb\tcev{D}_{\vect{D_{\gb^+}}(a)} + \left(e^\fb \odot \gb\right)\tcev{D_{D^+(a)}} - \vect{D^-}\left(\left(\gb\odot e^\fb\right)\tcev{D_a}\right)\right)|_{p^+=0} \\
&=\pi \circ \Phi (D_{\gb^+}(a)) \\
&=\Phi \circ \pi (D_{\gb^+}(a)) \\
&=\Phi(\id) = \id
\end{align*}
Here the first equality just uses the definitions, the second one uses (a slight generalization of) Lemma~\ref{lem:comp}, the third one uses \eqref{eq:g-homotopy}, the forth one again uses (generalizations of) Lemma~\ref{lem:comp}, the fifth one uses $D^+(a)=0$ and $\pi\circ D^-=0$, and the sixth one uses that $\fb\in \wh\LL$, so that $\pi\circ \Phi =  \Phi \circ \pi$, because under this assumption $\Phi$ maps any nontrivial (product of) monomial(s) in $q^+$ into a linear combination of nontrivial (products of) monomials in $q^-$.

As $\Phi$ maps $\ol{S}^{\le k}(\AA^+[2N])$ into $\ol{S}^{\le k}(\AA^-[2N])$, the claim follows.
\end{proof}

In the applications of torsion in rational SFT by Moreno and Zhou~\cite{MZ20}, $\hb$ is the rational Hamiltonian without point constraints for a contact manifold $V$, and monotonicity holds with respect to exact cobordisms.

The order is discussed in the more restricted setting that the structure $(\AA,\hb)$ admits an augmentation $\eps$, and then one considers the twisted versions $\hb_\eps$ and $\gb_\eps$ (in the sense of \S~\ref{ssec:aug_lin}), where $\gb$ is the part of the Hamiltonian with exactly one point constraint on a generic $\R \times \{x\} \subseteq \R \times V$. For monotonicity (with respect to exact cobordisms), one then needs that the augmentations $\eps^\pm$ of $(\AA^\pm,\hb^\pm)$ are compatible, in the sense that $\eps^+=\eps^- \circ \Phi$.

\begin{rem}
As \eqref{eq:g} was the main condition needed to define the order invariant, any cycle defining a homology class in the rational SFT algebra $H_*(\PP,\mathbf D)$ mentioned at the end of \S~\ref{ssec:structures} can be used as an input. Such an order invariant will be monotone with respect to (exact) cobordisms if the elements $g^\pm$ defined on the ends of a cobordism can be related as in \eqref{eq:g-homotopy}. All the single point constraints considered by Siegel (but now with the point in the symplectization $\R \times V$ rather than in some filling) are of this type.

Alternatively, in the presence of a filling, the {\em minimal word length} rather than the action associated to the maps $\Phi(T)$ considered by Siegel will also lead to numerical invariants which are monotone with respect to exact embeddings.
\end{rem}

%
%
%


\begin{thebibliography}{99.}

\bibitem{EGH00}
Y. Eliashberg, A. Givental and H. Hofer,
{\it Introduction to symplectic field theory},
Geom. and Func. Analysis, special volume (2000),
560--673.


\bibitem{Gan03}
W.L. Gan,
{\it Koszul duality for dioperads},
Math.~Research Lett.~{\bf 10} (2003), 109--124.

\bibitem{Hu13}
M.~Hutchings,
{\it Rational SFT using only $q$ variables}, blogpost, April 2013,\\
\url{https://floerhomology.wordpress.com/2013/04/23/rational-sft-using-only-q-variables/}.


\bibitem{MSS02}
M. Markl, S. Shnider and J. Stasheff,
{\it Operads in Algebra, Topology and Physics}
Math. Surveys and Monographs {\bf 96} Amer. Math. Soc.
(2002).

\bibitem{Mer06}
S. Merkulov,
{\it PROP Profile of Poisson Geometry},
Comm.~Math.~Phys.~{\bf 262} (2006), 117--135.

\bibitem{MZ20}
A. Moreno and Z. Zhou,
{\it A landscape of contact manifolds via rational SFT},
preprint, 2020,\\
\url{https://arxiv.org/abs/2012.04182}

\bibitem{Sie19}
K. Siegel,
{\it Higher symplectic capacities},
preprint, 2019,\\
\url{https://arxiv.org/abs/1902.01490}
\end{thebibliography}
\end{document}